\documentclass[12pt]{amsart}

\usepackage{amssymb}
\usepackage{latexsym}
\usepackage{exscale}
\usepackage{lscape}
\usepackage{mathrsfs}
\usepackage{esint}

\usepackage[colorlinks=true, pdfstartview=FitV, linkcolor=blue, citecolor=blue, urlcolor=blue]{hyperref}

\textwidth=1.2\textwidth \calclayout

%
%

\newtheorem{theorem}{Theorem}[section]
\newtheorem{lemma}[theorem]{\bf Lemma}
\newtheorem{corollary}[theorem]{Corollary}

\newtheorem{definition}[theorem]{Definition}

\newtheorem{rem}[theorem]{Remark}

\newtheorem{remark}[theorem]{\textbf{Remark}}

\allowdisplaybreaks

\numberwithin{equation}{section}



\newcommand{\R}{\mathbb R}
\newcommand{\Rn}{{{\mathbb R}^n}}
\newcommand{\Sn}{\mathcal{S}}

\newcommand{\bg}{\mathbf g}

\newcommand{\vphi}{\varphi}



\DeclareMathOperator{\supp}{supp}


\newcommand{\T}{\mathcal{T}}

\DeclareMathOperator{\la}{\langle}
\DeclareMathOperator{\ra}{\rangle}


\newcommand{\grad}{\nabla}

\DeclareMathOperator{\Div}{div}


\def\Q{{\mathcal Q}}

\def\u{{\bf u}}

\def\w{{\bf w}}
\def\h{{\bf h}}
\def\g{{\bf g}}


\def\Xint#1{\mathchoice
   {\XXint\displaystyle\textstyle{#1}}%
   {\XXint\textstyle\scriptstyle{#1}}%
   {\XXint\scriptstyle\scriptscriptstyle{#1}}%
   {\XXint\scriptscriptstyle\scriptscriptstyle{#1}}%
   \!\int}
\def\XXint#1#2#3{{\setbox0=\hbox{$#1{#2#3}{\int}$}
     \vcenter{\hbox{$#2#3$}}\kern-.5\wd0}}

\def\avgint{\Xint-}



\begin{document}

\title[Poincar\'e Inequalities and Neumann Problems]{Poincar\'e Inequalities and Neumann Problems for the $p$-Laplacian}

\author{David Cruz-Uribe, OFS, Scott Rodney, and Emily Rosta}

\address{David Cruz-Uribe, OFS \\
Dept. of Mathematics \\
University of Alabama \\
 Tuscaloosa, AL 35487, USA}
\email{dcruzuribe@ua.edu}

\address{Scott Rodney\\
Dept. of Mathematics, Physics and Geology \\ 
Cape Breton University \\
Sydney, NS B1Y3V3, CA} 
\email{scott\_rodney@cbu.ca}

\thanks{D.~Cruz-Uribe is supported by NSF Grant DMS-1362425 and
  research funds from the Dean of the College of Arts \& Sciences, the
  University of Alabama. S.~Rodney is supported by the NSERC Discovery
  Grant program.  E.~Rosta is a graduate of the undergraduate honours
  program in mathematics at Cape Breton University; her work was
  supported through the NSERC USRA program.}

\subjclass{30C65,35B65,35J70,42B35,42B37,46E35}

\keywords{degenerate Sobolev spaces, $p$-Laplacian, Poincar\'e inequalities} 

\date{August 6, 2017}

\begin{abstract}
  We prove an equivalence between weighted Poincar\'e inequalities and
  the existence of weak solutions to a Neumann problem related to a
  degenerate $p$-Laplacian.  The Poincar\'e inequalities are
  formulated in the context of degenerate Sobolev spaces defined in
  terms of a quadratic form, and the associated matrix is the source of
  the degeneracy in the $p$-Laplacian.
\end{abstract}

\maketitle

\section{Introduction} 

  In the study of regularity for elliptic PDEs, the existence of a
  Poincar\'e inequality plays a central role.  For example, [MRW2]
  employs a Poincar\'e inequality to establish a Harnack inequality
  for a large class of non-linear degenerate elliptic equations with
  finite-type degeneracies.  In many recent works the existence of a
  suitable Poincar\'e inequality is either assumed or must be proved
  separately. 

Given this, it is of interest to give a characterization of the
existence of a Poincar\'e inequality.  In this paper we show that this
is equivalent to the existence of a regular solution of a Neumann boundary value
problem for a degenerate $p$-Laplacian.   We formulate our result in
the very general setting of degenerate Sobolev spaces:  elliptic
operators have been considered in this setting by a number of authors:
see, for instance, \cite{CMN,CMR,MR,MRW1,MRW2,SW1,SW2} and the
references they contain.  

To state our main result we fix some notation.  For brevity we will
defer some more technical definitions to Section~\ref{section:prelim}.
Let $\Omega\subset\Rn$ be a fixed domain, and let $E$ be a bounded
open set with $\overline{E}\subset\Omega$.  Let $\Sn_{n}$ denote the
collection of all positive, semi-definite $n\times n$ self-adjoint
matrices;   fix a function $Q:\Omega \rightarrow S_{n}$ whose entries
are Lebesgue measurable and define the associated quadratic form
$\mathcal{Q}(x,\xi) = \xi^tQ(x)\xi$,  $x\in \Omega$ a.e. and
$\xi\in\mathbb{R}^n$.   We define 
\begin{equation*}
\gamma(x) = \left|Q(x)\right|_\text{op}
=\displaystyle\sup_{|\xi|=1} |Q(x)\xi|,
\end{equation*}
to be the  operator norm of $Q(x)$. 

Let $v$ be a weight on $\Omega$: i.e.,  $v$ is a non-negative
function in $L^1_\text{loc}(\Omega)$.    Given a function $f$ and a
set $E$, we define the weighted average of $f$ on $E$ by 
\[  f_E = f_{E,v} = \frac{1}{v(E)}\int_E f(x)v(x)\,dx=\avgint_E f\,
  dv. \]

We can now give two definitions that are central to
our main result.  Note that the degenerate Sobolev space
$\tilde{H}^{1,p}_Q(v;E)$ and our precise definition of weak solutions
is given in Section~\ref{section:prelim} below.

\begin{definition} \label{p-prop} 
Given $1\leq p<\infty$, a 
  quadratic from $\mathcal{Q}$ is said to have the Poincar\'e property
  of order $p$ on $E$ if there is a positive constant $C_p=C_p(E)$ such
  that for all $f\in C^1(\overline{E})$,
\begin{eqnarray} \label{poincare}
\int_E |f(x) - f_E|^pv(x)\,dx 
&\leq&  C_p \int_E \Big| \sqrt{Q(x)}\nabla f(x)\Big|^p \,dx \\
&=& C_p \int_E \mathcal{Q}(x,\grad f(x))^{p/2} \,dx.\nonumber
\end{eqnarray}
\end{definition}

\begin{definition}\label{HNP}
  Given $1\leq p<\infty$, a quadratic form $\mathcal{Q}$ is said to
  have the $p$-Neumann property on $E$ if the following hold:
\begin{enumerate}
\item Given any $f\in L^p(v;E)$, there exists a weak solution
  $(u,\g)_f \in \tilde{H}^{1,p}_Q(v;E)$ to the weighted homogeneous
  Neumann problem
\begin{equation}\label{nprob}
\begin{cases}
\Div\Big(\Big|\sqrt{Q(x)}\nabla u(x)\Big|^{p-2} Q(x)\nabla u(x)\Big) 
& = |f(x)|^{p-2}f(x)v(x) \text{ in }E\\
{\bf n}^t \cdot Q(x) \nabla u &= 0\text{ on }\partial E,
\end{cases}
\end{equation}
where ${\bf n}$ is the outward unit normal vector of
$\partial E$.

\item Any weak solution $(u,{\bf g})_f \in\tilde{H}^{1,p}_Q(E)$ of
  \eqref{nprob} is regular:   that is, there is a positive constant
  $D_p=D_p(v,E)$ such that 
\begin{equation}\label{hypoest}
\| u\|_{L^p(v;E)} \leq C \|f\|_{L^p(v;E)}.
\end{equation}
\end{enumerate}
\end{definition}

\medskip

Our main result shows that given very weak assumptions on the matrix
$Q$, these two properties are equivalent.

\begin{theorem}\label{main}
  Given $1< p<\infty$, suppose that
  $\gamma^{p/2}\in L^1_\emph{loc}(E)$.  Then the quadratic form
  $\mathcal{Q}(x,\cdot)$ is $p$-Neumann on $E$ if and only if
  $\mathcal{Q}(x,\cdot)$ has the Poincar\'e property of order $p$ on
  $E$.
\end{theorem}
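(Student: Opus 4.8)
The plan is to prove the two implications separately, exploiting the fact that the weak formulation of \eqref{nprob} is precisely the Euler--Lagrange equation of the strictly convex functional
\[
J_f(u) = \frac{1}{p}\int_E \big|\sqrt{Q(x)}\nabla u(x)\big|^p\,dx + \int_E |f(x)|^{p-2}f(x)\,u(x)\,v(x)\,dx,
\]
so that $(u,\mathbf{g})_f$ is a weak solution exactly when $u$ is a critical point of $J_f$ over the weighted mean-zero subspace of $\tilde H^{1,p}_Q(v;E)$. Throughout I would use that the hypothesis $\gamma^{p/2}\in L^1_{\mathrm{loc}}(E)$, together with $v\in L^1(E)$ (valid since $\overline E$ is compact in $\Omega$), guarantees $C^1(\overline E)\subset \tilde H^{1,p}_Q(v;E)$, so that every $f\in C^1(\overline E)$ has finite energy $\int_E|\sqrt{Q}\nabla f|^p\,dx$ and is an admissible test function.

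For the implication Poincar\'e $\Rightarrow$ $p$-Neumann I would establish existence by the direct method. First I would record that \eqref{poincare}, applied to mean-zero functions, yields $\|u\|_{L^p(v;E)}^p \le C_p\int_E|\sqrt Q\nabla u|^p\,dx$; combined with H\"older this makes $J_f$ coercive on the mean-zero subspace, since the linear term is dominated by $\|f\|_{L^p(v;E)}^{p-1}\|u\|_{L^p(v;E)}$. Convexity of $t\mapsto|t|^p$ and reflexivity of $L^p$ for $1<p<\infty$ give weak lower semicontinuity, so a minimizing sequence has a weakly convergent subsequence whose limit is a minimizer, hence a weak solution. For the regularity estimate \eqref{hypoest} I would test the weak formulation of an \emph{arbitrary} solution with $\varphi=u$, obtaining $\int_E|\sqrt Q\nabla u|^p\,dx = -\int_E|f|^{p-2}fvu\,dx \le \|f\|_{L^p(v;E)}^{p-1}\|u\|_{L^p(v;E)}$, and then apply Poincar\'e to the left-hand side; this yields $\|u\|_{L^p(v;E)}\le C_p^{1/(p-1)}\|f\|_{L^p(v;E)}$ for every weak solution, which is \eqref{hypoest}.

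The converse, $p$-Neumann $\Rightarrow$ Poincar\'e, is where the hypotheses are used most directly, via a test-function duality trick. Given $f\in C^1(\overline E)$, set $h=f-f_E$, so that $h$ has weighted mean zero and $\nabla h=\nabla f$; using the $p$-Neumann property I would solve \eqref{nprob} with data $h$ to obtain a weak solution $(u,\mathbf g)_h$. Testing once with $\varphi=h$ gives
\[
\int_E |h|^p v\,dx = -\int_E \big|\sqrt Q\nabla u\big|^{p-2}\big(\sqrt Q\nabla u\big)\cdot\big(\sqrt Q\nabla f\big)\,dx \le \Big(\int_E|\sqrt Q\nabla u|^p\,dx\Big)^{1/p'}\Big(\int_E|\sqrt Q\nabla f|^p\,dx\Big)^{1/p}
\]
by H\"older's inequality with exponents $p'$ and $p$. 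Testing once more with $\varphi=u$ gives $\int_E|\sqrt Q\nabla u|^p\,dx = -\int_E|h|^{p-2}hvu\,dx \le \|h\|_{L^p(v;E)}^{p-1}\|u\|_{L^p(v;E)}$, and the regularity estimate \eqref{hypoest} applied to the data $h$ bounds $\|u\|_{L^p(v;E)}\le D_p\|h\|_{L^p(v;E)}$, whence $\int_E|\sqrt Q\nabla u|^p\,dx\le D_p\|h\|_{L^p(v;E)}^p$. Substituting this into the display and dividing by $\|h\|_{L^p(v;E)}^{p-1}$ collapses everything to $\int_E|f-f_E|^p v\,dx \le D_p^{\,p-1}\int_E|\sqrt Q\nabla f|^p\,dx$, which is exactly \eqref{poincare}.

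I expect the main obstacle to lie in the existence half of the first implication: making the direct method rigorous inside the abstract degenerate Sobolev space $\tilde H^{1,p}_Q(v;E)$, whose elements are pairs $(u,\mathbf g)$. One must verify reflexivity and weak compactness for the relevant norm, confirm that a weak limit of such pairs is again compatible (the limiting $\mathbf g$ is the correct gradient of the limiting $u$), and reconcile the mean-zero constraint with the solvability condition $\int_E|f|^{p-2}fv=0$ that testing with constants would otherwise force, which is absorbed by minimizing over the mean-zero subspace as encoded in the weak formulation of Section~\ref{section:prelim}. The converse, by contrast, is essentially the two-line computation above once $h$ and $u$ are justified as admissible test functions by the density built into the definition of $\tilde H^{1,p}_Q(v;E)$.
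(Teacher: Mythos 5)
Your proof is correct, and the direction ``$p$-Neumann $\Rightarrow$ Poincar\'e'' is essentially the paper's argument in Section~\ref{proof-one}: the paper tests with $f$ (after subtracting $f_E$) and invokes Lemma~\ref{regeq}, whose proof is exactly your inline computation of testing with $\varphi=u$ and applying the regularity hypothesis. Where you genuinely diverge is the existence half of ``Poincar\'e $\Rightarrow$ $p$-Neumann.'' The paper (Section~\ref{proof-two}) treats $\mathcal{T}$ as a monotone operator and applies Minty's theorem (Theorem~\ref{minty}), verifying boundedness, monotonicity, hemicontinuity, and almost coercivity in Lemmas~\ref{boundedness}--\ref{coercivity}; only the last of these uses the Poincar\'e inequality. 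You instead exploit the fact that $\mathcal{T}$ is the G\^ateaux derivative of the convex energy $J_f$ and run the direct method on the mean-zero subspace. Both routes are legitimate and both rest on the same unstated foundation, namely reflexivity of $\tilde H^{1,p}_Q(v;E)$, which follows from the identification in Lemma~\ref{complete} of $\mathcal{L}^p_Q(E)$ with a sum of weighted $L^p$ spaces; and both need the density statement of Lemma~\ref{lemma:mean-zero} and the extension of the weak formulation to pair-valued test functions (Remark~\ref{remark:test-func}) before $u$ or $h$ may be used as a test object and before the Poincar\'e inequality may be applied to an abstract pair $(u,\mathbf g)$. Your approach buys a more self-contained argument (weak lower semicontinuity of a convex, norm-continuous functional via Mazur replaces the citation of Showalter), at the cost of having to justify differentiation under the integral sign for the $p$-energy --- which for $1<p<2$ requires exactly the vector inequalities for $|s|^{p-2}s$ that the paper uses in Lemma~\ref{hemicontinuity}, so no work is actually saved there. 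The paper's monotone-operator route is more robust in that it would survive replacing $\xi\mapsto|\sqrt Q\xi|^{p-2}Q\xi$ by a monotone nonlinearity that is not a potential. Two small points: $J_f$ is convex but not strictly convex as a functional of the pair $(u,\mathbf g)$ (it is affine in $u$), though strictness is irrelevant for existence; and the weak limit of a minimizing sequence lies in $\tilde H^{1,p}_Q(v;E)$ simply because closed subspaces are weakly closed, so the ``compatibility of the limiting $\mathbf g$ with the limiting $u$'' that you flag is automatic in this pair formalism.
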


\begin{remark} \label{remark:alt-char-reg}
  The regularity of the weak solution
  $(u,{\bf g})_f\in\tilde{H}^{1,p}_Q(v;E)$ of the Neumann problem \eqref{nprob} can
  also be characterized by the seemingly stronger estimate
\begin{equation}\label{a1} 
\|(u,\g)_f\|_{H^{1,p}_Q(v;E)} \leq D_p \|f\|_{L^p(v;E)}
\end{equation}
for a positive constant $C$ independent of $u$ and $f$.  The
equivalence of \eqref{hypoest} and \eqref{a1} arises as part of the
proof of Theorem~\ref{main}:  see
Lemma~\textup{\ref{regeq}}.
\end{remark} 

\begin{remark}
Implicit in Definition~\ref{HNP} appears to be the assumption that
$\partial E$ is sufficiently regular that the normal derivative exists
almost everywhere.  This, however, is not the case:  see the discussion
in Remark~\ref{remark:boundary} following the precise definition of a
weak solution to the Neumann problem.  
\end{remark}

\medskip

The remainder of the paper is organized as follows.  In
Section~\ref{section:prelim} we define and prove the completeness of
the matrix weighted space $\mathcal{L}^p_Q(\Omega)$, give the
important properties of the degenerate Sobolev spaces associated to
the quadratic form $\Q(x,\xi)$, and define a weak solution
of the Neumann boundary problem.  In Sections~\ref{proof-one}
and~\ref{proof-two} we give the proof of Theorem~\ref{main};   each
section contains the proof of one implication.  Finally, in
Section~\ref{applications}, we give several applications of
Theorem~\ref{main}.  The first is a model example where we deduce the
classical Poincar\'e inequality in the plane; it is of interest
because this proof is accessible
to undergraduates.  The second gives the solution of degenerate
$p$-Laplacians where the degeneracy is controlled by a Muckenhoupt
$A_p$ weight.  These problems are analogous to the Dirichlet problems
considered by Fabes, Kenig and Serapioni~\cite{FKS} and Modica~\cite{MR839035}.
The third considers solutions to degenerate $p$-Laplacians where the
least eigenvalue of the matrix $Q$  vanishes at
the origin as a large power of $|x|$.   These examples are gotten by
considering two weight Poincar\'e inequalities.  We conclude this
example by discussing briefly the application of recent
work on two-weight norm inequalities for the fractional integral
operator to prove degenerate Poincar\'e inequalities.

\section{Preliminaries}
\label{section:prelim}

In this section, we bring together the basic defintions of the objects
(Sobolev spaces, weak solutions, etc.) used in the statement and proof of Theorem \ref{main}.  

Let $Q : \Omega \rightarrow \Sn_n$ be a matrix-valued function whose
entries are Lebesgue measurable functions.  By
\cite[Lemma 2.3.1]{RS} there exists  a measurable unitary matrix function
$U(x)$ on $\Omega$ that diagonalizes $Q(x)$:  that is,
\[ Q(x) = U^{t}(x)D(x)U(x)\text{ a.e.}, \]
where $D(x) = \text{diag}(\lambda_1(x),...,\lambda_n(x))$ is a
diagonal matrix with measurable functions on the diagonal.  With a
fixed choice for $U(x)$, we may define positive powers of $Q(x)$:
given  $r>0$ we set $Q^r(x) = U^t(x) D^r(x) U(x)$ where
$D^r(x) = \text{diag}(\lambda_1^r(x),...,\lambda_n^r(x))$.

\begin{rem}  Negative powers may be defined in a similar fashion 
  if we assume $Q$ is positive definite a.e. (i.e.,  $Q(x)$ is
  invertible for almost every $x$); see \cite[\S 3]{CMR}.
\end{rem}

\smallskip

\subsection*{Sobolev Spaces}
We define our underlying degenerate Sobolev spaces as the completions
of $C^1$ functions with respect to norms related to the quaratic form
$\Q(x,\xi)$.  For $1\leq p<\infty$ define $\mathcal{L}^p_Q(E)$ to be the collection
of all measurable $\mathbb{R}^n$ valued functions
${\bf f}=(f_1,...,f_n)$ that satisfy
\begin{equation}\label{normLQ}
\|{\bf f}\|_{\mathcal{L}^p_Q(E)} 
= \Big(\int_E \mathcal{Q}(x,{\bf f}(x))^{p/2}\,dx\Big)^{1/p}
=\Big(\int_E \Big|\sqrt{Q(x)}{\bf f}(x)\Big|^p\,dx\Big)^{1/p} <\infty.
\end{equation}
More properly we define $\mathcal{L}^p_Q(E)$ to be the normed vector
space of equivalence classes under the equivalence relation ${\bf f} \equiv {\bf g}$ if
$\|{\bf f}-{\bf g}\|_{\mathcal{L}^p_Q(E)} = 0$.  Note that if
${\bf f}(x)={\bf g}(x)$ a.e., then ${\bf f} \equiv {\bf g}$, but the
converse need not be true, depending on the degeneracy of~$Q$.  

  In
\cite[chapter 3]{SW2}, the space $\mathcal{L}^2_Q(E)$ was shown to a
Hilbert space whenever $|Q(x)|_\text{op}\in L^1_\text{loc}(E)$.  Here
we generalize this result by showing that $\mathcal{L}^p_Q(E)$ 
is complete for all $1\leq p<\infty$.

\begin{lemma} \label{complete} Given $1\leq p<\infty$ and a measurable
  matrix function $Q:E\rightarrow \Sn_n$, if 
  $\gamma^{p/2} \in L^1_\emph{loc}(E)$, then $\mathcal{L}_Q^p(E)$ is
a Banach space.  
\end{lemma}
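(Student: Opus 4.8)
The plan is to prove completeness of $\mathcal{L}^p_Q(E)$ by the standard criterion: a normed space is Banach if and only if every absolutely convergent series converges. So I would begin with a sequence $\{\mathbf{f}_k\}$ with $\sum_k \|\mathbf{f}_k\|_{\mathcal{L}^p_Q(E)} < \infty$ and aim to produce a limit $\mathbf{f} \in \mathcal{L}^p_Q(E)$ for the partial sums. The key observation is that the quantity that controls the norm is not $\mathbf{f}$ itself but $\sqrt{Q}\,\mathbf{f}$, so I would pass to the scalar-valued functions $g_k(x) = |\sqrt{Q(x)}\,\mathbf{f}_k(x)|$, which live in the ordinary Lebesgue space $L^p(E)$ with $\|g_k\|_{L^p(E)} = \|\mathbf{f}_k\|_{\mathcal{L}^p_Q(E)}$.

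Next I would exploit the completeness of $L^p(E)$: since $\sum_k \|g_k\|_{L^p(E)} < \infty$, a Minkowski/monotone-convergence argument shows that $G = \sum_k |\sqrt{Q}\,\mathbf{f}_k|$ lies in $L^p(E)$, and in particular $G(x) < \infty$ for a.e.\ $x$. The difficulty is that pointwise finiteness of $\sum_k |\sqrt{Q(x)}\,\mathbf{f}_k(x)|$ controls only the images under $\sqrt{Q(x)}$, whereas to define a candidate limit $\mathbf{f}$ I want to sum the $\mathbf{f}_k$ themselves in $\mathbb{R}^n$. Here I would use the measurable diagonalization $Q = U^t D U$ to restrict attention, at each point $x$, to the range of $Q(x)$: writing $\mathbf{f}_k = U^t(x)\mathbf{h}_k(x)$, the components of $\mathbf{h}_k$ lying in the kernel of $D(x)$ contribute nothing to the norm and may be discarded (replaced by $0$), while on the nonzero eigenspaces $|\sqrt{Q(x)}\mathbf{f}_k(x)|^2 = \sum_j \lambda_j(x)|h_{k,j}(x)|^2$ dominates $|h_{k,j}(x)|$ whenever $\lambda_j(x) > 0$. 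This lets me define, a.e.\ in $x$, a genuine pointwise sum $\mathbf{f}(x)$ of the modified $\mathbf{f}_k(x)$ on the range of $Q(x)$.

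With the candidate $\mathbf{f}$ in hand, the final step is to verify $\mathbf{f} \in \mathcal{L}^p_Q(E)$ and that the partial sums $S_N = \sum_{k=1}^N \mathbf{f}_k$ converge to $\mathbf{f}$ in norm. Both follow from dominated convergence applied to the scalar integrands: $|\sqrt{Q}\,(\mathbf{f} - S_N)|^p \to 0$ a.e., dominated by $(2G)^p \in L^1(E)$, giving $\|\mathbf{f} - S_N\|_{\mathcal{L}^p_Q(E)} \to 0$. I expect the main obstacle to be the measurability bookkeeping in the second step: one must check that the pointwise-defined limit $\mathbf{f}$ is a measurable $\mathbb{R}^n$-valued function and that the construction respects the equivalence relation (functions agreeing after application of $\sqrt{Q}$ are identified), for which the measurability of $U(x)$ and $D(x)$ from the cited diagonalization lemma is essential. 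The hypothesis $\gamma^{p/2} = |Q|_{\mathrm{op}}^{p/2} \in L^1_{\mathrm{loc}}(E)$ enters to guarantee that $C^1$ functions, and hence the relevant comparison objects, actually belong to $\mathcal{L}^p_Q(E)$, so that the space is nontrivially normed and the completeness statement is meaningful.
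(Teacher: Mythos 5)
Your proposal is correct, and it rests on the same core device as the paper's proof: the measurable diagonalization $Q=U^tDU$ from \cite{RS} and the resulting decomposition of $\mathbf{f}$ into eigencomponents, for which $|\sqrt{Q(x)}\mathbf{f}(x)|^2=\sum_j\lambda_j(x)|\tilde f_j(x)|^2$. The difference is in how the completeness itself is extracted. The paper converts this identity into the norm equivalence $\|\mathbf{f}\|_{\mathcal{L}^p_Q(E)}\approx\sum_j\|\tilde f_j\|_{L^p(\lambda_j^{p/2};E)}$ and then simply quotes the completeness of the weighted Lebesgue spaces $L^p(\lambda_j^{p/2};E)$, so the whole argument is a two-line reduction to known facts. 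You instead run the absolutely-convergent-series criterion by hand: you get a.e.\ finiteness of $G=\sum_k|\sqrt{Q}\mathbf{f}_k|$ from unweighted $L^p(E)$, use the bound $\sqrt{\lambda_j(x)}\,|h_{k,j}(x)|\le|\sqrt{Q(x)}\mathbf{f}_k(x)|$ to sum the eigencomponents pointwise on $\{\lambda_j>0\}$, discard the kernel components, and close with dominated convergence (or Fatou plus the triangle inequality). Your version is longer but more self-contained, and it has the virtue of making explicit two points the paper's proof glosses over: the kernel/range bookkeeping forced by the degeneracy of $Q$ (a representative of the limit only makes sense modulo $\ker Q(x)$), and the fact that the hypothesis $\gamma^{p/2}\in L^1_{\mathrm{loc}}(E)$ is not really what drives completeness --- $L^p$ of any nonnegative measure is complete --- but rather ensures the space contains the gradients of $C^1(\overline{E})$ functions so that the Sobolev spaces built on it are nontrivial. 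Just be sure, when writing it up, to verify measurability of the truncated components $h_{k,j}\chi_{\{\lambda_j>0\}}$ and to note that replacing each $\mathbf{f}_k$ by its projection onto the range of $Q(x)$ does not change its class in $\mathcal{L}^p_Q(E)$, so norm convergence of the modified partial sums is the same as norm convergence of the original ones.
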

  
\begin{proof}
We prove completeness.  As above,  denote by $\lambda_1(x),...,\lambda_n(x)$
the measurable eigenvalues of $Q(x)$.   Fix 
a measurable function ${\bf f}:E\rightarrow \mathbb{R}^n$.  We can now
argue as in \cite[Remark~5]{SW2}:   choose measurable unit eigenvectors
$v_j(x),\;j=1\dots n$, and write ${\bf f}$ as
\[ {\bf f}(x) = \displaystyle \sum_{i=1}^n \tilde{f}_j(x)v_j(x), \]
where $\tilde{f}_j$ is the $j^{\text{th}}$ component of ${\bf f}$
with respect to the basis $\{v_j\}$.  Since eigenvectors are
orthogonal, the action of our quadratic form on ${\bf f}$ can be
written as
\[
\mathcal{Q}({\bf f}(x),x)
= \Big(\displaystyle \sum_{j=1}^n \tilde{f}_j(x) v_j(x)\Big) \cdot \Big(\displaystyle\sum_{j=1}^n Q(x) \tilde{f}_j(x)v_j(x)\Big)
= \sum_{j=1}^n |\tilde{f}_j(x)|^2 \lambda_j(x).
\]
Using this we can  rewrite the norm \eqref{normLQ} as an equivalent
sum of weighted norms:
\[ \|{\bf f}\|_{\mathcal{L}^p_Q(E)}  
\approx \displaystyle \sum_{j=1}^n \Big(\int_E
|\tilde{f}_j(x)|^p\lambda_j(x)^{p/2}\,dx\Big)^{1/p}
= \displaystyle \sum_{j=1}^n \|\tilde{f}_j\|_{L^p(\lambda_j^{p/2};E)}.
\]
Since $\lambda_j^{p/2}(x) \leq \gamma(x)^{p/2}\in L^1_\text{loc}(E)$
a.e., the  spaces $L^p(\lambda_j^{p/2};E)$ are complete, and so
$\mathcal{L}^p_Q(E)$ is complete as well.
\end{proof}

We now define the corresponding degenerate Sobolev spaces.   First, we
define $H^{1,p}_Q(v;E)$ as the collection of equivalence
classes of Cauchy sequences of $C^1(\overline{E})$ functions.

\begin{definition}
For $1\leq p<\infty$, the Sobolev space $H^{1,p}_Q(v;E)$ is the
abstract completion of $C^1(\overline{E})$ with respect to the norm 
\begin{equation}\label{sobnorm}
\| f \|_{H^{1,p}_Q(E)} = \|f\|_{L^p(v;E)} + \|\nabla f\|_{\mathcal{L}^p_Q(E)}.
\end{equation}
\end{definition}

Because of the degeneracy of $Q$, we cannot represent $H^{1,p}_Q(v;E)$
as a space of functions, except in special situations.  Since
$L^p(v;E)$ and $\mathcal{L}^p_Q(E)$ are complete, given an equivalence
class of $H^{1,p}_Q(v;E)$ there exists a unique pair
$\vec{\bf f}=(f,{\bf g})\in L^p(v;E)\times \mathcal{L}^p_Q(E)$ that we
can use to represent it.  Such pairs are unique and so we refer to
elements of $H^{1,p}_Q(v;E)$ using their representative pair.
However, because of the famous example of Fabes, Kenig, and Serapioni
in \cite{FKS}, the vector function ${\bf g}$ need not be uniquely
determined by the first component $f$ of the pair: if we think of
${\bf g}$ as the ``gradient'' of $f$, then we have that there exist
non-constant functions $f$ whose gradient is $0$.  Nevertheless, if we
consider constant sequences we see that the pair
$\vec{\bf f} = (f,\nabla f)$ is in $H^{1,p}_Q(v;E)$ whenever
$f\in C^1(\overline{E})\cap H^{1,p}_Q(v;E)$ or, more simply, if
$f\in C^1(\overline{E})$ and $\gamma^{p/2}\in L^1(E)$.  We refer the
interested reader to \cite{CMN,CMR,CRW,MR,MRW1,MRW2,SW1,SW2} for
definitions and discussions of these and similar spaces, including
examples where the gradient is uniquely defined.  

Since we are considering Neuman boundary problems and Poincar\'e estimates, it
is important to restrict our attention to the ``mean-zero" subspace of
$H^{1,p}_Q(v;E)$.  More precisely, we define
\[ \tilde{H}^{1,p}_Q(v;E) 
= \{ (u,{\bf g})\in H^{1,p}_Q(v;E)\;:\; \int_E u(x)v(x)\,dx = 0\}.
\]

\begin{lemma}\label{meanzerocomplete}
Given a measurable matrix function $Q : E \rightarrow \Sn_n$ and
$1\leq p<\infty$,  if  $\gamma^{p/2} \in L^1_\emph{loc}(\Omega)$, then
the space $\tilde{H}^{1,p}_Q(v;E)$ is complete.
\end{lemma}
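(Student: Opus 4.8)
The plan is to realize $\tilde{H}^{1,p}_Q(v;E)$ as the kernel of a bounded linear functional on $H^{1,p}_Q(v;E)$ and then invoke the standard fact that a closed subspace of a Banach space is itself complete. First I would record that $H^{1,p}_Q(v;E)$ is a Banach space: it is defined as the abstract completion of $C^1(\overline{E})$, and by Lemma~\ref{complete} together with the completeness of $L^p(v;E)$ its elements are faithfully represented by pairs $(u,\mathbf{g})$ lying in the product Banach space $L^p(v;E)\times\mathcal{L}^p_Q(E)$ equipped with the norm \eqref{sobnorm}.

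Next, define the mean-value functional $L:H^{1,p}_Q(v;E)\to\mathbb{R}$ by $L(u,\mathbf{g})=\int_E u(x)v(x)\,dx$. This depends only on the first coordinate $u$, and since $u$ is determined as an element of $L^p(v;E)$ (that is, up to equality $v$-a.e.), the integral is well defined on equivalence classes; $L$ is plainly linear. By the very definition of the mean-zero subspace, $\tilde{H}^{1,p}_Q(v;E)=\ker L$.

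The crux is the continuity of $L$. Because $\overline{E}$ is compact and contained in $\Omega$ and $v\in L^1_{\text{loc}}(\Omega)$, we have $v(E)=\int_E v\,dx<\infty$. Hölder's inequality with respect to the measure $v\,dx$ then gives, with $p'=p/(p-1)$,
\[
|L(u,\mathbf{g})|\le\int_E|u|\,v\,dx\le v(E)^{1/p'}\,\|u\|_{L^p(v;E)}\le v(E)^{1/p'}\,\|(u,\mathbf{g})\|_{H^{1,p}_Q(v;E)},
\]
where the final inequality uses that $\|u\|_{L^p(v;E)}$ is one of the two summands comprising the norm in \eqref{sobnorm}. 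Hence $L$ is bounded, so $\ker L=\tilde{H}^{1,p}_Q(v;E)$ is a closed subspace of the Banach space $H^{1,p}_Q(v;E)$, and therefore complete.

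I expect no serious obstacle in this argument. The one point requiring care is the finiteness $v(E)<\infty$, which underwrites the boundedness of $L$ and follows precisely from $\overline{E}\subset\Omega$ compact and the local integrability of $v$. One should also confirm that $L$ is genuinely well defined on the pair representatives rather than on individual representing Cauchy sequences, but this is immediate once the identification of $H^{1,p}_Q(v;E)$ with a subspace of $L^p(v;E)\times\mathcal{L}^p_Q(E)$ is in place.
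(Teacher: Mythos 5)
Your proof is correct and follows essentially the same route as the paper's: both reduce to showing that $\tilde{H}^{1,p}_Q(v;E)$ is a closed subspace of the Banach space $H^{1,p}_Q(v;E)$ via the identical H\"older estimate $\bigl|\int_E uv\,dx\bigr|\le v(E)^{1/p'}\|u\|_{L^p(v;E)}$, using $v(E)<\infty$. Packaging this as boundedness of the mean-value functional $L$, rather than as a direct limit argument on Cauchy sequences as the paper does, is only a cosmetic difference (and for $p=1$, where $p'=p/(p-1)$ is not defined, the estimate is even simpler since $\int_E|u|v\,dx=\|u\|_{L^1(v;E)}$).
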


\begin{proof}
  We first consider the case $p>1$.  We will show that $\tilde{H}^{1,p}_Q(v;E)$ is a closed
  subspace of ${H}^{1,p}_Q(v;E)$.  Let
  $\{(u_j,{\bf g}_j)\}$ be a Cauchy sequence in
  $\tilde{H}^{1,p}_Q(v;E)$.  By the completness of ${H}^{1,p}_Q(v;E)$,
  there is an element $(u,{\bf g})\in H^{1,p}_Q(v;E)$ such  that
  $u_j\rightarrow u$ in $L^p(v;E)$ and ${\bf g}_j\rightarrow {\bf g}$
  in $\mathcal{L}^p_Q(E)$.   Moreover, we have that
\begin{multline*} 
\Big|\int_E u(x)v(x)dx\Big|
 =\Big| \int_E (u(x)-u_j(x))v(x)\,dx\Big| \\
\leq \int_E |u(x)-u_j(x)|v^{1/p}(x)v^{1/p'}(x)\,dx
\leq v(E)^{1/p'}\| u - u_j\|_{L^p(v;E)}.  
\end{multline*}
Since $v$ is locally integrable,
$v(E)<\infty$.  Thus, since the last term on the right goes to zero as
$j\rightarrow \infty$, we conclude that $\int_E
u(x)v(x)\,dx=0$, so  $(u,{\bf g})\in\tilde{H}^{1,p}_Q(v;E)$.

The case $p=1$ is similar and is left to the reader.
\end{proof}

Below, we will need the following density result.

\begin{lemma} \label{lemma:mean-zero}
Given a measurable matrix function $Q : E \rightarrow \Sn_n$ and
$1\leq p<\infty$, the set $C^1(\overline{E})\cap
\tilde{H}^{1,p}_Q(v;E)$ is dense in $\tilde{H}^{1,p}_Q(v;E)$.
\end{lemma}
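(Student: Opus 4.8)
The plan is to exploit the fact that, by construction, $C^1(\overline{E})$ is dense in the full space $H^{1,p}_Q(v;E)$, and then to correct an approximating sequence by subtracting weighted averages so that the corrected functions land in the mean-zero subspace. First I would fix an arbitrary element $(u,\mathbf{g})\in\tilde{H}^{1,p}_Q(v;E)$. Since $\tilde{H}^{1,p}_Q(v;E)\subset H^{1,p}_Q(v;E)$ and the latter is by definition the abstract completion of $C^1(\overline{E})$ with respect to the norm \eqref{sobnorm}, I can choose a sequence $\{f_j\}\subset C^1(\overline{E})$ with $(f_j,\nabla f_j)\to(u,\mathbf{g})$ in $H^{1,p}_Q(v;E)$; equivalently, $f_j\to u$ in $L^p(v;E)$ and $\nabla f_j\to\mathbf{g}$ in $\mathcal{L}^p_Q(E)$. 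These $f_j$ need not be mean-zero, which is the only defect to repair.

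Next I would enforce the mean-zero condition by subtracting a constant. Because $\overline{E}$ is compact and contained in $\Omega$ while $v\in L^1_{\mathrm{loc}}(\Omega)$, we have $v(E)<\infty$, so the weighted averages $c_j=(f_j)_E=\tfrac{1}{v(E)}\int_E f_j\,v\,dx$ are finite. Setting $\tilde{f}_j=f_j-c_j$, we obtain $\tilde{f}_j\in C^1(\overline{E})$ with $\int_E\tilde{f}_j\,v\,dx=0$ and $\nabla\tilde{f}_j=\nabla f_j$, so that $(\tilde{f}_j,\nabla f_j)\in C^1(\overline{E})\cap\tilde{H}^{1,p}_Q(v;E)$, which is precisely the class of functions whose density we want.

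It then remains to check that this corrected sequence still converges to $(u,\mathbf{g})$. The gradient components are unchanged, so $\nabla\tilde{f}_j\to\mathbf{g}$ for free. For the first component, the key observation is that the constants $c_j$ tend to $0$: since $u$ is mean-zero we have $u_E=0$, and the same Hölder estimate used in the proof of Lemma~\ref{meanzerocomplete} gives
\[
|c_j|=|c_j-u_E|\le\frac{1}{v(E)}\int_E|f_j-u|\,v\,dx\le v(E)^{-1/p}\,\|f_j-u\|_{L^p(v;E)}\longrightarrow 0.
\]
Consequently $\|\tilde{f}_j-u\|_{L^p(v;E)}\le\|f_j-u\|_{L^p(v;E)}+|c_j|\,v(E)^{1/p}\to 0$, and combining the two components shows $(\tilde{f}_j,\nabla f_j)\to(u,\mathbf{g})$ in $H^{1,p}_Q(v;E)$, establishing density.

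I do not expect a serious obstacle here: the argument is essentially routine once the right correction is identified. The only points requiring any care are the finiteness of $v(E)$ and the convergence of the averages $c_j\to 0$, both of which rest on the local integrability of $v$ and a single application of Hölder's inequality, exactly as in Lemma~\ref{meanzerocomplete}. Note also that, unlike the earlier completeness results, this argument uses neither the hypothesis $\gamma^{p/2}\in L^1_{\mathrm{loc}}$ nor any distinction between the cases $p=1$ and $p>1$, so the conclusion holds uniformly for all $1\le p<\infty$.
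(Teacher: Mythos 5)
Your argument is correct and is essentially identical to the paper's own proof: both take an approximating sequence from $C^1(\overline{E})$, subtract the weighted average to land in the mean-zero subspace, and use $u_E=0$ together with H\"older's inequality to show the subtracted constants tend to zero. No gaps; nothing further to add.
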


\begin{proof}
Fix $(u,\g)\in \tilde{H}^{1,p}_Q(v;E)$.   Since $C^1(\overline{E})$ is dense in $\tilde{H}^{1,p}_Q(v;E)\subset H
^{1,p}_Q(v;E)$, there exists a sequence of functions $u_k \in
C^1(\overline{E})$ such that $(u_k,\grad u_k)$  converges to
$(u,\g)$.  Let $v_k = u_k - (u_k)_E  \in C^1(\overline{E})\cap
\tilde{H}^{1,p}_Q(v;E)$.  Then $\grad v_k=\grad u_k$, so to prove that
$(v_k, \grad v_k)$ converges to $(u,\g)$ it will suffice to prove that
$u_k-v_k$ converges to $0$ in $L^p(v; E)$.  Since $u_E=0$, we have that
\[ \|u_k-v_k\|_{L^p(v; E)} 
= |(u_k)_E-u_E| v(E)^{1/p}
\leq v(E)^{-1/p'}\int_E  |u_k-u|\,dx 
\leq  \|u_k-u\|_{L^p(v; E)}, \]
and since the right-hand term goes to $0$, we get the desired
convergence.
\end{proof}

\medskip

\begin{rem} The space $H^{1,p}_Q(1;E)$ is not in general the same as
  $W^{1,p}_\mathcal{Q}(E)$, defined in \cite{MRW1,MRW2} as the
  completion with respect to the norm \eqref{sobnorm} (with $v=1$) of $Lip_\text{loc}(E)$, 
  unless $E$ has some additional boundary regularity.  The
  next result, an amalgam of \cite[Theorems~5.3,~5.6]{CMR},  gives an
  example where these spaces coincide; it also requires further
  regularity on the matrix $Q$ in terms of the matrix $\mathcal{A}_p$ condition
  and we refer the reader to \cite{CMR} for complete definitions.

\begin{theorem}
  Let $E\subset\mathbb{R}^n$ be a domain whose boundary is locally a
  Lipschitz graph.  If $1\leq p<\infty$, $W = Q^{p/2}$ is a matrix
  $\mathcal{A}_p$ weight and $v=|W|_{op}=\gamma^{p/2}$, then
  $H^{1,p}_Q(v;E) = \mathscr{H}^{1,p}_W(E)$ where
  $\mathscr{H}^{1,p}_W(E)$ is the completion of $C^\infty(E)$ with
  respect to the norm
\[   \|f\|_{\mathscr{H}^{1,p}_W} 
= \|f\|_{L^p(\gamma;E)} + \left(\int_E |W^{1/p}\nabla f|^p\,dx\right)^{1/p}.
\]
\end{theorem}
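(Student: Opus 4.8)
The plan is to exhibit a single concrete space of functions, equipped with weak (distributional) gradients, into which both completions embed isometrically. The first observation is that, under the stated identifications, the two defining norms agree on every smooth function: since $W=Q^{p/2}$ we have $W^{1/p}=(Q^{p/2})^{1/p}=Q^{1/2}=\sqrt{Q}$, so the first-order terms $\big(\int_E|\sqrt{Q}\grad f|^p\,dx\big)^{1/p}$ and $\big(\int_E|W^{1/p}\grad f|^p\,dx\big)^{1/p}$ are literally identical, while the zeroth-order terms coincide because in both norms the $L^p$ weight is the scalar $|W|_{\op}=\gamma^{p/2}=v$. Hence $H^{1,p}_Q(v;E)$ and $\mathscr{H}^{1,p}_W(E)$ are the completions of $C^1(\overline{E})$ and of $C^\infty(E)$, respectively, with respect to one and the same norm, and the whole content of the theorem is a pair of density statements: $C^1(\overline{E})$ functions approximate $C^\infty(E)$ functions in norm, and conversely.

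To make ``approximate'' meaningful I would first use the matrix $\mathcal{A}_p$ hypothesis to represent both completions as genuine function spaces. The $\mathcal{A}_p$ condition forces $v=|W|_{\op}$ to be a scalar weight regular enough that $L^p(v;E)\hookrightarrow L^1_{\loc}(E)$ and, crucially, that the Fabes--Kenig--Serapioni pathology discussed above cannot occur: the vector ${\bf g}$ attached to a pair $(f,{\bf g})$ is uniquely determined and equals the weak gradient $\grad f$. This lets me define a concrete weighted Sobolev space $\mathscr{W}$ of functions $f\in L^p(v;E)$ whose distributional gradient satisfies $|W^{1/p}\grad f|\in L^p(E)$, and reduces the theorem to showing that $C^1(\overline{E})$ and $C^\infty(E)$ are each dense in $\mathscr{W}$.

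Density of $C^\infty(E)$ is a weighted Meyers--Serrin theorem: given $f\in\mathscr{W}$, mollify on an interior exhaustion of $E$. Convergence of the mollifications in the matrix-weighted norm is exactly where the $\mathcal{A}_p$ control is used, via boundedness of the Hardy--Littlewood maximal operator on $L^p(v)$ together with the corresponding matrix-weighted estimates; this yields $\mathscr{H}^{1,p}_W(E)=\mathscr{W}$, and it simultaneously gives the easy inclusion $C^1(\overline{E})\subset\mathscr{W}$ by interior mollification. The remaining, harder, direction is density of $C^1(\overline{E})$: here I would straighten the boundary locally using the Lipschitz graph assumption, push $f$ strictly into $E$ by a translation (or dilation) in the inward cone direction, and then mollify, producing functions smooth up to $\overline{E}$. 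The Lipschitz condition is precisely what guarantees a uniform inward cone, so that the translated function is defined on a neighborhood of $\overline{E}$, while the $\mathcal{A}_p$ hypothesis supplies continuity of translation in the weighted $L^p$ norm so that the approximation converges; this gives $H^{1,p}_Q(v;E)=\mathscr{W}$.

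The main obstacle is this last boundary approximation: combining a merely Lipschitz (non-smooth) boundary with a genuinely matrix-valued weight means one must control $\int_E|W^{1/p}\grad(f\circ\Phi)|^p\,dx$ under the boundary-straightening and translation maps $\Phi$, where the chain rule mixes the components of $\grad f$ against the variable matrix $W$. Keeping the matrix-weighted gradient norm under control through these changes of variables---rather than merely a scalar weighted norm---is the delicate point, and it is what forces the $\mathcal{A}_p$ hypothesis on $W$ in addition to the geometric hypothesis on $\partial E$. Once both density statements are in hand, $H^{1,p}_Q(v;E)=\mathscr{W}=\mathscr{H}^{1,p}_W(E)$, which completes the proof.
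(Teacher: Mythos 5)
First, a point of reference: the paper does not prove this theorem. It is stated explicitly as ``an amalgam of \cite[Theorems~5.3,~5.6]{CMR}'' and the reader is sent to that paper, so your argument has to be measured against \cite{CMR} rather than anything proved here. Your opening reduction is correct and is the right frame: with powers of $Q$ defined through the fixed measurable diagonalization, $W^{1/p}=(Q^{p/2})^{1/p}=\sqrt{Q}$, so the gradient terms of the two norms are literally identical, and (reading the zeroth-order weight in the $\mathscr{H}^{1,p}_W$ norm as $v=|W|_{\op}=\gamma^{p/2}$, which is what the hypotheses force and what \cite{CMR} uses --- the ``$L^p(\gamma;E)$'' in the displayed norm is evidently a typo unless $p=2$) the two spaces are completions of $C^1(\overline{E})$ and of $C^\infty(E)$ with respect to one and the same norm. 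Reducing the theorem to a two-sided density statement, mediated by a concrete space $\mathscr{W}$ of functions with weak gradients, is exactly the strategy of \cite{CMR}.

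The gaps are in the two density claims themselves, which are not routine details to be deferred: they \emph{are} Theorems 5.3 and 5.6 of \cite{CMR}. Two concrete points. (i) Your mechanism for excluding the Fabes--Kenig--Serapioni pathology is misattributed. Regularity of the scalar weight $v=|W|_{\op}$ controls $Q$ from \emph{above} and gives $L^p(v;E)\hookrightarrow L^1_{\loc}(E)$ for the functions, but uniqueness of the gradient requires an embedding $\mathcal{L}^p_Q(E)\hookrightarrow L^1_{\loc}(E;\mathbb{R}^n)$, and by H\"older that needs local integrability of a power of $|W^{-p'/p}|_{\op}$, i.e.\ control of $Q$ from \emph{below}. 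It is the dual-weight half of the matrix $\mathcal{A}_p$ condition, not the behaviour of $|W|_{\op}$, that rules out nonconstant functions with vanishing $Q$-gradient; as written, your justification would not detect the FKS example. (ii) The approximation by $C^1(\overline{E})$ functions, which you yourself call the main obstacle, is named but not carried out. The inward-translation-plus-mollification scheme does work on Lipschitz domains, but the convergence $\int_E|W^{1/p}(x)\,[(\nabla f)(x+\tau)-\nabla f(x)]|^p\,dx\to 0$ requires a continuity-of-translation estimate in the matrix-weighted $L^p$ space, and the interior Meyers--Serrin step requires the matrix-weighted maximal/averaging bounds (Goldberg's theorem) that you invoke only by name. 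These estimates are the actual analytic content of the cited theorems, so the proposal is a correct road map rather than a proof: completing it amounts to reproving \cite[Theorems~5.3 and 5.6]{CMR}.
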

\end{rem}

\medskip

\subsection*{Weak Solutions}
We can now define a weak solution to the degenerate $p$-Laplacian in
Definition~\ref{HNP}.  Given $f\in L^{p}(v;E)$, we say that a pair
$(u,{\bf g})_f\in \tilde{H}^{1,p}_Q(v;E)$ is a weak solution to the weighted
homogeneous Neumann problem
\begin{equation}\label{nprob1}
\begin{cases}
\Div\Big(\Big|\sqrt{Q(x)}\nabla u(x)\Big|^{p-2} Q(x)\nabla u(x)\Big) 
& = |f(x)|^{p-2}f(x)v(x) \text{ in }E\\
{\bf n}^t \cdot Q(x) \nabla u &= 0\text{ on }\partial E,
\end{cases}
\end{equation}
if for all test functions $\vphi\in C^1(\overline{E})\cap \tilde{H}^{1,p}_Q(v;E)$,
\begin{equation}\label{weaksol}
 \int_E |\sqrt{Q(x)}{\bf g}(x)|^{p-2}(\nabla \vphi)^tQ(x){\bf g}(x)\,dx\nonumber\\
=  -\int_E |f(x)|^{p-2}f(x)\vphi(x)v(x)\,dx.
\end{equation}

With our assumptions we have {\em a priori} that both sides
of~\eqref{weaksol} are finite. Since ${\bf g} \in  \mathcal{L}^p_Q(E)$
and $\vphi\in C^1(\overline{E})$, by H\"older's inequality we get
\begin{multline} \label{eqn:LHS-finite}
\bigg|\int_E |\sqrt{Q(x)}{\bf g}(x)|^{p-2}(\nabla \vphi)^tQ(x){\bf
  g}(x)\,dx\bigg| \\
\leq \int_E |\sqrt{Q(x)}{\bf g}(x)|^{p-1}|\sqrt{Q}(x)\nabla \vphi|\,dx
\leq \|{\bf g}\|_{\mathcal{L}^p_Q(E)}^{p-1} \|\nabla
\vphi\|_{\mathcal{L}^p_Q(E)} <\infty;
\end{multline}
similarly, since $f\in L^p(v; E)$, 
\begin{multline*}
 \bigg|\int_E |f(x)|^{p-2}f(x)\vphi(x)v(x)\,dx\bigg| \\
\leq \int_E |f(x)|^{p-1}v^{1/p'}(x)|\vphi(x)|v^{1/p}(x)\,dx 
\leq \|f\|_{L^{p}(v;E)}^{p-1} \|\vphi\|_{L^p(v;E)}<\infty. 
\end{multline*}

\begin{remark} \label{remark:test-func}
If $(u,{\bf g})_f$ is a weak solution of \eqref{nprob1},
  then by an approximation argument~\eqref{weaksol} also holds for all
  test ``functions'' $(w,{\bf h})\in \tilde{H}^{1,p}_Q(v;E)$.  That is, the
  relation holds if we replace $\vphi$ with $w$ and $\nabla \vphi$ with ${\bf h}$.
\end{remark}

\begin{remark}
Our definition of a weak solution to the Neumann
problem~\eqref{nprob1} is somewhat different than that used in the classical setting
(i.e., $v=1$, $Q=Id$):  we do not impose a compatibility condition
the initial data $f$, and instead require that our test functions have
mean zero.  Thus our definition of a weak solution is weaker than the
one used in
the classical setting.   Our definition is motivated by the connection with the
Poincar\'e inequality, as will be clear from the proof of
Theorem~\ref{main} below.
\end{remark}

\begin{remark} \label{remark:boundary}
The definition of a weak solution of the equation~\eqref{nprob1}
actually makes no assumptions on the regularity of the boundary of the
set $E$.  It is the case, however, that if $\partial E$ is such that
its normal vector ${\bf n}$ exists almost everywhere and the Divergence
theorem holds, then our definition is equivalent to assuming that
${\bf n}^t\cdot Q(x)\grad u = 0$ a.e.
\end{remark}

\section{The Proof of Theorem~\ref{main}:  $p$-Neumann implies $p$-Poincar\'e}
\label{proof-one}

We begin by proving the alternate characteriziation of the regularity of a weak
solution mentioned in Remark~\ref{remark:alt-char-reg}.

\begin{lemma}\label{regeq}
  Given $1<p<\infty$ and $f\in L^p(v;E)$, if 
  $(u,{\bf g})_f\in\tilde{H}^{1,p}_Q(v;E)$ is a weak solution of the Neumann problem~\eqref{nprob1},
  then $\|(u,{\bf g})\|_{H^{1,p}_Q(v;E)}\lesssim \|f\|_{L^p(v;E)}$ if
  and only if $\|u\|_{L^p(v;E)} \lesssim \|f\|_{L^p(v;E)}$.
\end{lemma}

\begin{proof}
Since $\|u\|_{L^p(v;E)} \leq \|(u,{\bf g})\|_{H^{1,p}_Q(v;E)}$, one
direction is immediate.  To prove the converse, suppose $(u,{\bf
  g})\in \tilde{H}^{1,p}_Q(v;E)$ be a weak solution of
\eqref{nprob1} that satisfies $\|u\|_{L^p(v;E)} \lesssim
\|f\|_{L^p(v;E)}$.  Then by Remark~\ref{remark:test-func} we can take
the pair $(u,{\bf g})$ as our test ``function'' in~\eqref{weaksol} to
get 
\begin{multline*}
\| \bg \|_{\mathcal{L}^p_Q(E)}^p 
= \int_E \left| \sqrt{Q}\bg\right|^{p-2} Q\bg \cdot\bg\,dx \\
 \leq \int_E |f|^{p-1} u v\,dx 
\leq \|f\|_{L^p(v;E)}^{p-1}\|u\|_{L^p(v;E)} 
\leq C\|f\|_{L^p(v;E)}^p.
\end{multline*}
Hence,
\[ \|(u,\bg)\|_{H^{1,p}_Q(v;E)} 
= \|u\|_{L^p(v;E)} + \|\bg \|_{\mathcal{L}^p_Q(E)}\leq C\|f\|_{L^p(v;E)}. \]
\end{proof}

\medskip

The proof that the $p$-Neumann property implies the $p$-Poincar\'e
property essentially follows from  Lemma~\ref{regeq}.
Suppose the quadratic form $\mathcal{Q}(x,\xi)$ is $p$-Neumann on $E$.
Fix $f\in C^1(\overline{E})$,  $\|f\|_{L^p(v; E)}\neq 0$; assume for the
moment that 
$f_E= \int_E f(x)v(x)\,dx=0$.  Then we need to prove that $\|f\|_{L^p(v;
  E)} \lesssim \|\grad f\|_{\mathcal{L}^p_Q(E)}$.  

Let 
$(u,{\bf g})_f\in \tilde{H}^{1,p}_Q(v;E)$ be a weak solution of
\eqref{nprob1} corresponding to this function~$f$.  Since $f$ itself is a valid test
function,  we can apply the definition of a weak solution,
estimate exactly as in~\eqref{eqn:LHS-finite}, and then apply
Lemma~\ref{regeq} to get
\begin{align*}
\| f\|_{L^p(v;E)}^p 
& = \bigg| -\int_E |f(x)|^{p-2}f(x)f(x)v(x)\,dx\bigg|\\
& = \bigg|\int_E |\sqrt{Q(x)}{\bf g}(x)|^{p-2}(\nabla f(x))^tQ(x){\bf
  g}(x)\,dx\bigg| \\
& \leq \|{\bf g}\|_{\mathcal{L}^p_Q(E)}^{p-1} \|\nabla
  f\|_{\mathcal{L}^p_Q(E)} \\
&   \leq C \| f\|_{L^p(v;E)}^{p-1}
\|\nabla  f\|_{\mathcal{L}^p_Q(E)}.  
\end{align*}
Since $\|f\|_{L^p(v; E)}^{p-1}\neq 0$ we can divide by this quantity to get
the desired inequality.

To complete the proof, fix an arbitrary $f\in C^1(\overline{E})$ and
let $k = f-f_E$.  Then $k$ has mean zero and $\grad k = \grad f$, so
applying the previous argument to $k$ yields the desired Poincar\'e inequality.

\begin{flushright}
$\square$
\end{flushright}

\section{Proof of Theorem~\ref{main}: 
$p$-Poincar\'e implies $p$-Neumann } 
\label{proof-two}

Assume that $\mathcal{Q}$ has the Poincar\'e property of order $p$ on
$E$.  Fix an arbitrary function $f\in L^p(v; E)$; we will show that
there exists a weak solution $(u,{\bf g})_f\in \tilde{H}^{1,p}_Q(v;E)$
to the Neumann problem~\eqref{nprob} and that it satisfies the
regularity estimate~\eqref{hypoest}.

The first step is to show that a solution exists.  We will do so by
using Minty's theorem~\cite{S}, which can be thought of as a Banach
space version of the classical Lax-Milgram theorem.  To state this
result we first introduce some notation.  Given a
reflexive Banach space $\mathcal{B}$  denote its dual space by
$\mathcal{B}^*$.  Given a functional $\alpha\in \mathcal{B}^*$, write
its value at $\vphi\in \mathcal{B}$ as
$\alpha(\vphi) = \langle \alpha,\vphi\rangle$.  Thus, if
$\beta:\mathcal{B} \rightarrow\mathcal{B}^*$ and $u\in \mathcal{B}$,
then  we
have $\beta(u)\in \mathcal{B}^*$ and so its value at $\vphi$ is
denoted by $\beta(u)(\vphi) = \langle \beta(u),\vphi\rangle$.

\begin{theorem}\label{minty}
  Let $\mathcal{B}$ be a reflexive Banach space and fix
  $\Gamma\in \mathcal{B}^*$.  Suppose that
  $\mathcal{T}:\mathcal{B} \rightarrow \mathcal{B}^*$ is a bounded
  operator that is:
\begin{enumerate}

\item Monotone:  $\langle
  \mathcal{T}(u)-\mathcal{T}(\vphi),u-\vphi \rangle \geq 0$ for all
  $u,\vphi \in \mathcal{B}$; 

\item Hemicontinuous:  for $z\in \mathbb{R}$, the mapping
  $z\rightarrow \langle\mathcal{T}(u+z\vphi),\vphi \rangle$ is
  continuous for all $u,\varphi \in\mathcal{B}$;

\item Almost Coercive:   there exists a constant $\lambda>0$ so
  that $\langle\mathcal{T}(u),u\rangle \geq \langle \Gamma,u\rangle$
  for any $u\in\mathcal{B}$ satisfying $\|u\|_\mathcal{B} >\lambda$. 

\end{enumerate}
Then the set of $u\in \mathcal{B}$ such that  $\mathcal{T}(u) = \Gamma$ is non-empty.
\end{theorem}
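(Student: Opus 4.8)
The plan is to use the Galerkin method to produce approximate solutions in finite-dimensional subspaces, extract a weak limit, and then identify that limit as a genuine solution of $\mathcal T(u)=\Gamma$ via the classical monotonicity (``Minty'') trick. Assuming, as we may in the application, that $\mathcal B$ is separable, I would fix a sequence $\{w_j\}_{j\ge 1}$ whose finite linear span is dense in $\mathcal B$ and set $\mathcal B_n=\mathrm{span}\{w_1,\dots,w_n\}$. A preliminary observation to establish first is that a monotone, hemicontinuous operator $\mathcal T:\mathcal B\to\mathcal B^*$ is demicontinuous (strong-to-weak$^*$ continuous); in particular, on each finite-dimensional $\mathcal B_n$ the coordinate map $\xi\mapsto\big(\langle \mathcal T(\sum_j\xi_j w_j),w_i\rangle\big)_{i=1}^n$ from $\mathbb R^n$ to $\mathbb R^n$ is continuous. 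This is the ingredient that licenses a finite-dimensional fixed-point argument.

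The first main step is solvability of the Galerkin system. Writing $u_\xi=\sum_{j=1}^n\xi_j w_j$, define $P_n(\xi)=\big(\langle \mathcal T(u_\xi)-\Gamma, w_i\rangle\big)_{i=1}^n$, so that $P_n(\xi)\cdot\xi=\langle \mathcal T(u_\xi)-\Gamma, u_\xi\rangle$. By the almost coercivity hypothesis, $P_n(\xi)\cdot\xi\ge 0$ whenever $\|u_\xi\|_{\mathcal B}>\lambda$. Applying the ``acute angle'' consequence of Brouwer's theorem on the bounded convex set $D=\{\xi:\|u_\xi\|_{\mathcal B}<\lambda+1\}$, on whose boundary the sign condition holds, yields a zero $\xi^{(n)}$ of $P_n$, that is, a Galerkin solution $u_n\in\mathcal B_n$ satisfying $\langle \mathcal T(u_n),w_i\rangle=\langle\Gamma,w_i\rangle$ for $i\le n$ together with the uniform bound $\|u_n\|_{\mathcal B}\le\lambda+1$. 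This is precisely where the almost-coercivity condition is consumed.

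Since $\mathcal B$ is reflexive and $\{u_n\}$ is bounded, a subsequence satisfies $u_{n_k}\rightharpoonup u$; since $\mathcal T$ is a bounded operator, $\{\mathcal T(u_n)\}$ is bounded in the (also reflexive) dual $\mathcal B^*$, so after passing to a further subsequence $\mathcal T(u_{n_k})\rightharpoonup\chi$ in $\mathcal B^*$. For fixed $j$ and $n_k\ge j$ the Galerkin identity gives $\langle \mathcal T(u_{n_k}),w_j\rangle=\langle\Gamma,w_j\rangle$; letting $k\to\infty$ gives $\langle\chi,w_j\rangle=\langle\Gamma,w_j\rangle$ for every $j$, and by density $\chi=\Gamma$. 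Moreover, because $u_{n_k}\in\mathcal B_{n_k}$, we have $\langle \mathcal T(u_{n_k}),u_{n_k}\rangle=\langle\Gamma,u_{n_k}\rangle\to\langle\Gamma,u\rangle$.

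It remains to show $\mathcal T(u)=\Gamma$, and this is the heart of the argument, since weak convergence $u_{n_k}\rightharpoonup u$ does not allow us to pass to the limit in $\mathcal T(u_{n_k})$ directly. For arbitrary $w\in\mathcal B$, monotonicity gives $\langle \mathcal T(u_{n_k})-\mathcal T(w),u_{n_k}-w\rangle\ge 0$; expanding and passing to the limit using $\mathcal T(u_{n_k})\rightharpoonup\Gamma$, $u_{n_k}\rightharpoonup u$, and the energy convergence above yields $\langle\Gamma-\mathcal T(w),u-w\rangle\ge 0$ for all $w\in\mathcal B$. Taking $w=u-z\vphi$ with $z>0$ and $\vphi\in\mathcal B$ arbitrary, dividing by $z$, and letting $z\to 0^+$ while invoking hemicontinuity gives $\langle\Gamma-\mathcal T(u),\vphi\rangle\ge 0$; replacing $\vphi$ by $-\vphi$ gives equality, so $\mathcal T(u)=\Gamma$. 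I expect the main obstacle to be exactly this last identification: one cannot use strong-to-weak continuity of $\mathcal T$, which is not assumed, so the Minty trick — using monotonicity to produce an inequality valid for \emph{all} $w$, and then hemicontinuity along the ray $w=u-z\vphi$ — is essential. A secondary technical point is the separability reduction needed to fix the Galerkin basis, together with the demicontinuity lemma guaranteeing continuity of the finite-dimensional map $P_n$.
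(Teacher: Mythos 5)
This theorem is not proved in the paper: it is quoted as a known result with a citation to Showalter's monograph \cite{S}, so there is no in-paper argument to compare against. Your proposal is the standard Browder--Minty proof (Galerkin approximation via the acute-angle corollary of Brouwer's theorem, uniform bounds from almost coercivity, weak-limit extraction by reflexivity, and identification of the limit by the monotonicity trick plus hemicontinuity along rays), and it is correct in outline; this is essentially the argument given in the cited reference. Two points deserve attention. First, the theorem as stated is for an arbitrary reflexive Banach space, while your Galerkin scheme requires separability; either you must add the standard reduction (exhausting $\mathcal{B}$ by separable closed subspaces, or running the Galerkin argument over the net of all finite-dimensional subspaces ordered by inclusion), or you should note explicitly that in the application $\mathcal{B}=\tilde{H}^{1,p}_Q(v;E)$ is separable, being the completion of $C^1(\overline{E})$. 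Second, the ``preliminary observation'' that a bounded monotone hemicontinuous operator is demicontinuous is a genuine lemma that your finite-dimensional solvability step depends on (hemicontinuity alone gives continuity only along lines, not continuity of $\xi\mapsto P_n(\xi)$ on $\mathbb{R}^n$); it is standard, but in a complete write-up it must be proved or precisely cited. With those two items supplied, the argument is complete.
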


To apply Minty's theorem to find a weak solution, let
$\mathcal{B}=\tilde{H}^{1,p}_Q(v;E)$. Given $\vec{\bf u} =
(u,{\bf g})$ and $\vec{\bf w}=(w,{\bf h})$ in $\tilde{H}^{1,p}_Q(v;E)$,   define the operator
$\mathcal{T}:\tilde{H}^{1,p}_Q(v;E) \rightarrow
\big(\tilde{H}^{1,p}_Q(v;E)\big)^*$ by 
\[ 
\langle\mathcal{T}(\vec{\bf u}),\vec{\bf w}\rangle = \int_E
|\sqrt{Q(x)}{\bf g}(x)|^{p-2}{\bf h}^t(x)Q(x) {\bf g}(x)\,dx.
\]
Now define $\Gamma \in \mathcal{B}^*$ by 
\[ \Gamma(\vec{\bf w}) 
= \langle \Gamma,\vec{\bf w}\rangle = -\int_E
|f(x)|^{p-2}f(x)w(x)v(x)\,dx; \]
by H\"older's inequality we have that $\Gamma\in
\big({H^{1,p}_Q(v;E)}\big)^*$ whenever $f\in L^{p}(v;E)$. 
Then $\vec{\bf u} = (u,{\bf g})$ is a weak solution of
\eqref{nprob} if and only if
\begin{equation*}
\langle \mathcal{T}(\vec{\bf u}),\vec{\bf w}\rangle 
= \langle \Gamma,\vec{\bf w}\rangle
\end{equation*}
for all $\vec{\bf w}\in \tilde{H}^{1,p}_Q(v;E)$.  Assume for the
moment that $\mathcal{T}$ is a bounded, monotone, hemicontinuous,
almost coercive operator.  Then we have proved the following result.

\begin{theorem}\label{exist}
  Suppose $1<p<\infty$, $\gamma^{p/2}\in L^1_\emph{loc}(E)$ and that
  $\mathcal{Q}$ has the Poincar\'e property of order $p$ on $E$.  If
  $f\in L^p(v;E)$, then there is a weak solution
  $\vec{\u}=(u,{\bf g})_f \in \tilde{H}^{1,p}_Q(v;E)$ to the Neumann problem
  \eqref{nprob}.
\end{theorem}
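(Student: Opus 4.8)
The plan is to verify that the operator $\mathcal{T}$ and the functional $\Gamma$ constructed above fulfill the hypotheses of Minty's theorem (Theorem~\ref{minty}) on $\mathcal{B}=\tilde{H}^{1,p}_Q(v;E)$, so that $\mathcal{T}(\vec{\bf u})=\Gamma$ has a solution, which is exactly a weak solution of~\eqref{nprob}. First I would record that $\mathcal{B}$ is reflexive: it is complete by Lemma~\ref{meanzerocomplete}, and since $H^{1,p}_Q(v;E)$ embeds isometrically (via the representative pair) into $L^p(v;E)\times \mathcal{L}^p_Q(E)$, which by Lemma~\ref{complete} is a finite product of reflexive, indeed uniformly convex, spaces when $1<p<\infty$, the space $\mathcal{B}$ is a closed subspace of a reflexive space and hence reflexive. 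That $\Gamma\in\mathcal{B}^*$ and that $\mathcal{T}$ maps into $\mathcal{B}^*$ with $\|\mathcal{T}(\vec{\bf u})\|_{\mathcal{B}^*}\leq \|\bg\|_{\mathcal{L}^p_Q(E)}^{p-1}$ both follow from H\"older's inequality exactly as in~\eqref{eqn:LHS-finite}; in particular $\mathcal{T}$ is bounded.

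Next I would check monotonicity and hemicontinuity, both of which are pointwise in nature. Writing $a=\sqrt{Q}\bg$ and $b=\sqrt{Q}\bh$ and using the symmetry of $Q$, the integrand of $\langle\mathcal{T}(\vec{\bf u})-\mathcal{T}(\vec{\bf w}),\vec{\bf u}-\vec{\bf w}\rangle$ reduces to $|a|^p+|b|^p-(|a|^{p-2}+|b|^{p-2})\,a\cdot b$; bounding $a\cdot b\leq |a||b|$ by Cauchy--Schwarz and invoking the elementary inequality $(|a|^{p-1}-|b|^{p-1})(|a|-|b|)\geq 0$ shows the integrand is nonnegative a.e., giving monotonicity. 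For hemicontinuity I would fix $\vec{\bf u},\vec{\bf w}$ and observe that the integrand of $z\mapsto \langle\mathcal{T}(\vec{\bf u}+z\vec{\bf w}),\vec{\bf w}\rangle$ is continuous in $z$ for a.e.~$x$ and, for $|z|\leq M$, is dominated by $(|a|+M|b|)^{p-1}|b|$, which is integrable by H\"older; dominated convergence then yields continuity in $z$.

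The main obstacle is almost coercivity, and this is precisely where the Poincar\'e hypothesis enters. Computing directly gives $\langle\mathcal{T}(\vec{\bf u}),\vec{\bf u}\rangle=\|\bg\|_{\mathcal{L}^p_Q(E)}^p$, while H\"older gives $|\langle\Gamma,\vec{\bf u}\rangle|\leq \|f\|_{L^p(v;E)}^{p-1}\|u\|_{L^p(v;E)}$. The difficulty is that $\langle\mathcal{T}(\vec{\bf u}),\vec{\bf u}\rangle$ controls only the gradient component $\bg$, whereas $\langle\Gamma,\vec{\bf u}\rangle$ involves $u$; to compare them I must bound $\|u\|_{L^p(v;E)}$ by $\|\bg\|_{\mathcal{L}^p_Q(E)}$. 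To obtain this I would first extend inequality~\eqref{poincare} from $C^1(\overline{E})$ to all of $\tilde{H}^{1,p}_Q(v;E)$: given $(u,\bg)$ of mean zero, take $u_k\in C^1(\overline{E})$ of mean zero with $(u_k,\nabla u_k)\to(u,\bg)$ via Lemma~\ref{lemma:mean-zero}, apply~\eqref{poincare} to each $u_k$, and pass to the limit to get $\|u\|_{L^p(v;E)}\leq C_p^{1/p}\|\bg\|_{\mathcal{L}^p_Q(E)}$. Combining this with the two estimates above yields $\langle\mathcal{T}(\vec{\bf u}),\vec{\bf u}\rangle-\langle\Gamma,\vec{\bf u}\rangle\geq \|\bg\|_{\mathcal{L}^p_Q(E)}^p-C_p^{1/p}\|f\|_{L^p(v;E)}^{p-1}\|\bg\|_{\mathcal{L}^p_Q(E)}$, which is nonnegative once $\|\bg\|_{\mathcal{L}^p_Q(E)}$ exceeds a threshold depending only on $f$ and $C_p$. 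Since the extended Poincar\'e inequality also gives $\|\vec{\bf u}\|_{\mathcal{B}}\leq (1+C_p^{1/p})\|\bg\|_{\mathcal{L}^p_Q(E)}$, a lower bound on $\|\vec{\bf u}\|_{\mathcal{B}}$ forces $\|\bg\|_{\mathcal{L}^p_Q(E)}$ to be large, so an appropriate choice of $\lambda$ establishes almost coercivity. With all four hypotheses verified, Minty's theorem produces $\vec{\bf u}=(u,\bg)_f\in\mathcal{B}$ with $\mathcal{T}(\vec{\bf u})=\Gamma$, which is the desired weak solution.
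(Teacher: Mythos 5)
Your proposal is correct and follows the same overall route as the paper: you introduce the same operator $\mathcal{T}$ and functional $\Gamma$ on $\mathcal{B}=\tilde{H}^{1,p}_Q(v;E)$ and verify the four hypotheses of Minty's theorem, with almost coercivity handled exactly as in Lemma~\ref{coercivity} by extending the Poincar\'e inequality to $\tilde{H}^{1,p}_Q(v;E)$ via the density result of Lemma~\ref{lemma:mean-zero}. Where you diverge is in the verification of the pointwise hypotheses, and your versions are arguably cleaner. For monotonicity the paper cites the vector inequality $\la |s|^{p-2}s-|r|^{p-2}r,\,s-r\ra_{\Rn}\geq 0$ from Lindqvist, whereas you derive it on the spot from Cauchy--Schwarz and the monotonicity of $t\mapsto t^{p-1}$; for hemicontinuity the paper splits into the cases $p\geq 2$ and $1<p<2$ and uses two different quantitative estimates for $|s|^{p-2}s-|r|^{p-2}r$, whereas your dominated-convergence argument with majorant $(|a|+M|b|)^{p-1}|b|$ handles all $1<p<\infty$ uniformly (the quantitative route does yield local Lipschitz/H\"older moduli in $z$, which DCT does not, but Minty only needs continuity). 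You also explicitly justify reflexivity of $\mathcal{B}$ by embedding it as a closed subspace of $L^p(v;E)\times\mathcal{L}^p_Q(E)$, a point the paper leaves implicit even though Minty's theorem requires it; one tiny imprecision is that Lemma~\ref{complete} gives an equivalent norm realizing $\mathcal{L}^p_Q(E)$ as (isometric to) a closed subspace of a product of weighted $L^p$ spaces rather than the product itself, but the reflexivity conclusion stands.
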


\begin{rem} Theorem \ref{exist} complements \cite[Theorem 3.14]{CMN}
  where they proved the existence of weak solutions to the
  corresponding Dirichlet problem.
\end{rem}

Before establishing that the hypotheses of Theorem~\ref{minty} hold,
we will first complete the proof of Theorem~\ref{main} by showing that inequality
\eqref{hypoest} is a consequence of the Poincar\'e inequality.  Fix 
$f\in C^1(\overline{E})$ and let $\vec{\u}=(u,\g)_f \in \tilde{H}^{1,p}_Q(v;E)$ be a
corresponding weak solution of \eqref{nprob}.    First note
that $(u,\g)_f$ satisfies the Poincar\'e inequality: $\|u\|_{L^p(v;
  E)}\lesssim \| \g\|_{\mathcal{L}^p_Q(E)}$.     This follows from
Lemma~\ref{lemma:mean-zero}, since we can apply the Poincar\'e
inequality to a sequence of mean zero functions in $C^1(\overline{E})$
that approximate $(u,\g)_f$.  

Given this inequality, we can now argue as follows:  by the definition
of a weak solution and H\"older's inequality,
\begin{multline*}
\|u\|_{L^p(v;E)}^p 
\leq C\|\g\|_{\mathcal{L}^p_Q(E)}^p 
= C \int_E |\sqrt{Q}\g|^{p-2} (\g)^tQ\g\,dx \\
\leq  C\Big|\int_E |f|^{p-2}fuv\,dx\Big| 
\leq C\|u\|_{L^p(v;E)}\|f\|_{L^p(v;E)}^{p-1}.
\end{multline*}
Rearranging terms we get
\[ \|u\|_{L^p(v;E)} \leq C\|f\|_{L^p(v;E)} \]
which is \eqref{hypoest}, the desired estimate.

\medskip

Finally, in the next four lemmas we verify all hypotheses of Minty's theorem for the operator $\T$.


\begin{lemma}\label{boundedness} 
For $1\leq p<\infty$,  the operator $\mathcal{T}$ is bounded on $\tilde{H}^{1,p}_Q(v;E)$.
\end{lemma}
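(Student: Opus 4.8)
The plan is to show that $\mathcal{T}$ maps bounded sets to bounded sets; equivalently, that $\|\mathcal{T}(\vec{\bf u})\|_{\mathcal{B}^*} \lesssim \|\vec{\bf u}\|_{H^{1,p}_Q(v;E)}^{p-1}$. The computation is essentially the one already carried out in~\eqref{eqn:LHS-finite}, but applied to a general element $\vec{\bf w}=(w,{\bf h})\in\tilde{H}^{1,p}_Q(v;E)$ in place of a $C^1$ test function.

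First I would rewrite the integrand of $\langle \mathcal{T}(\vec{\bf u}), \vec{\bf w}\rangle$ in a form amenable to a Cauchy--Schwarz estimate. Since $Q(x)$ is self-adjoint and positive semi-definite, $\sqrt{Q(x)}$ is self-adjoint and $Q(x) = \sqrt{Q(x)}\sqrt{Q(x)}$, so that ${\bf h}^t Q {\bf g} = (\sqrt{Q}\,{\bf h}) \cdot (\sqrt{Q}\,{\bf g})$. Applying Cauchy--Schwarz pointwise gives $|{\bf h}^t Q {\bf g}| \leq |\sqrt{Q}\,{\bf h}|\,|\sqrt{Q}\,{\bf g}|$, whence
\[
|\langle \mathcal{T}(\vec{\bf u}), \vec{\bf w}\rangle| \leq \int_E |\sqrt{Q}\,{\bf g}|^{p-1}\,|\sqrt{Q}\,{\bf h}|\,dx.
\]

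Next I would apply H\"older's inequality with exponents $p' = p/(p-1)$ and $p$. Since $(p-1)p' = p$, the first factor becomes $\|{\bf g}\|_{\mathcal{L}^p_Q(E)}^{p-1}$ and the second $\|{\bf h}\|_{\mathcal{L}^p_Q(E)}$, giving
\[
|\langle \mathcal{T}(\vec{\bf u}), \vec{\bf w}\rangle| \leq \|{\bf g}\|_{\mathcal{L}^p_Q(E)}^{p-1}\,\|{\bf h}\|_{\mathcal{L}^p_Q(E)}.
\]
Bounding each $\mathcal{L}^p_Q$ norm by the full norm on $\tilde{H}^{1,p}_Q(v;E)$ and taking the supremum over all $\vec{\bf w}$ with $\|\vec{\bf w}\|_{H^{1,p}_Q(v;E)} \leq 1$ then yields $\|\mathcal{T}(\vec{\bf u})\|_{\mathcal{B}^*} \leq \|\vec{\bf u}\|_{H^{1,p}_Q(v;E)}^{p-1}$, the desired bound.

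There is no serious obstacle here: the argument is a direct application of Cauchy--Schwarz and H\"older, and the finiteness of all integrals is guaranteed exactly as in the discussion surrounding~\eqref{eqn:LHS-finite}. The only point worth flagging is conceptual rather than technical --- since $\mathcal{T}$ is nonlinear for $p \neq 2$, ``bounded'' must be read as ``maps bounded sets to bounded sets,'' and the homogeneity of degree $p-1$ exhibited above is precisely what delivers this.
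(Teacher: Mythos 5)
Your argument is correct and is essentially identical to the paper's proof: a pointwise Cauchy--Schwarz estimate $|{\bf h}^t Q{\bf g}|\leq|\sqrt{Q}{\bf h}||\sqrt{Q}{\bf g}|$ followed by H\"older with exponents $p'$ and $p$, then bounding the $\mathcal{L}^p_Q$ norms by the full $H^{1,p}_Q(v;E)$ norms. Your closing remark on what ``bounded'' means for the nonlinear operator $\mathcal{T}$ is a useful clarification that the paper leaves implicit.
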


\begin{proof}
  With the same notation as before, fix
  $\vec{\bf u},\vec{\bf w}\in \tilde{H}^{1,p}_Q(v;E)$. By the
  Cauchy-Schwarz inequality,
\[ \Big|\eta\cdot Q(x)\xi\Big| \leq
  \Big|\sqrt{Q(x)}\eta\Big|\;\Big|\sqrt{Q(x)}\xi\Big|, \]
and so by H\"older's inequality, 
\[ 
\Big|\langle \mathcal{T}(\vec{\bf u}),\vec{\bf w}\rangle\Big| 
\leq \int_E |\sqrt{Q(x)}{\bf g}(x)|^{p-1}|\sqrt{Q(x)}{\bf h}(x)|\,dx
\leq \|\vec{\bf u}\|_{H^{1,p}_Q(v;E)}^{p-1} \|\vec{\bf
  w}\|_{H^{1,p}_Q(v;E)}.
\]
It follows at once that $T$ is bounded.
\end{proof}


\begin{lemma}\label{monotonicity} 
For $1\leq p <\infty$, the operator $\mathcal{T}$ is monotone.
\end{lemma}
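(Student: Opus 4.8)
The plan is to compute the monotonicity pairing $\langle \mathcal{T}(\vec{\bf u}) - \mathcal{T}(\vec{\bf w}), \vec{\bf u} - \vec{\bf w}\rangle$ directly from the definition of $\mathcal{T}$ and to exhibit its integrand as a pointwise nonnegative quantity. Writing $\vec{\bf u} = (u,{\bf g})$ and $\vec{\bf w} = (w,{\bf h})$, so that $\vec{\bf u} - \vec{\bf w}$ has gradient component ${\bf g} - {\bf h}$, I would first expand
\[
\langle \mathcal{T}(\vec{\bf u}) - \mathcal{T}(\vec{\bf w}), \vec{\bf u} - \vec{\bf w}\rangle
= \int_E ({\bf g} - {\bf h})^t Q(x)\Big(|\sqrt{Q(x)}{\bf g}|^{p-2}{\bf g} - |\sqrt{Q(x)}{\bf h}|^{p-2}{\bf h}\Big)\,dx,
\]
noting that the $L^p(v)$-components $u$ and $w$ never enter, since $\mathcal{T}$ depends on its argument only through the gradient slot.

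Next I would reduce the integrand to the familiar $p$-Laplacian form. Since $Q(x)$ is positive semidefinite and self-adjoint, $\sqrt{Q(x)}$ is symmetric and $Q(x) = \sqrt{Q(x)}\,\sqrt{Q(x)}$. Setting $a = \sqrt{Q(x)}{\bf g}$ and $b = \sqrt{Q(x)}{\bf h}$, the identity $({\bf g} - {\bf h})^t Q\,{\bf g} = (\sqrt{Q}({\bf g} - {\bf h}))\cdot(\sqrt{Q}{\bf g}) = (a - b)\cdot a$ (and similarly for the ${\bf h}$ term) shows that the integrand equals
\[
(a - b)\cdot\big(|a|^{p-2}a - |b|^{p-2}b\big),
\]
with the usual convention $|z|^{p-2}z := 0$ for $z = 0$. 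Thus monotonicity of $\mathcal{T}$ reduces to the classical pointwise monotonicity of the map $\xi \mapsto |\xi|^{p-2}\xi$ on $\mathbb{R}^n$.

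To finish I would verify this pointwise inequality directly. Expanding and applying the Cauchy-Schwarz bound $a\cdot b \leq |a|\,|b|$ to the two cross terms gives
\[
(a - b)\cdot\big(|a|^{p-2}a - |b|^{p-2}b\big)
\geq |a|^p + |b|^p - |a|^{p-1}|b| - |b|^{p-1}|a|
= \big(|a|^{p-1} - |b|^{p-1}\big)\big(|a| - |b|\big),
\]
which is nonnegative because $t \mapsto t^{p-1}$ is nondecreasing on $[0,\infty)$ for $p \geq 1$. Integrating over $E$ then yields $\langle \mathcal{T}(\vec{\bf u}) - \mathcal{T}(\vec{\bf w}), \vec{\bf u} - \vec{\bf w}\rangle \geq 0$, as required.

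This computation is essentially routine, so I do not expect a deep obstacle; the only points requiring care are the algebraic reduction via the symmetry and square-root factorization of $Q(x)$, and the handling of the degenerate values $a = 0$ or $b = 0$ in the integrand (which the convention above resolves, and for which, when $p > 1$, the map $z \mapsto |z|^{p-2}z$ extends continuously by zero). One should also record that all the integrals above are finite a priori, which follows exactly as in the boundedness estimate of Lemma~\ref{boundedness}.
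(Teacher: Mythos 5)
Your proof is correct and follows essentially the same route as the paper: both expand the pairing, factor through $\sqrt{Q(x)}$ to reduce the integrand to the pointwise quantity $\la |a|^{p-2}a-|b|^{p-2}b,\;a-b\ra_{\Rn}$ with $a=\sqrt{Q}\,{\bf g}$, $b=\sqrt{Q}\,{\bf h}$, and then invoke the monotonicity of $\xi\mapsto|\xi|^{p-2}\xi$. The only difference is that where the paper cites \cite[chapter 10]{L} (or \cite{MDJ}) for this last inequality, you supply the short self-contained Cauchy--Schwarz argument yielding $\bigl(|a|^{p-1}-|b|^{p-1}\bigr)\bigl(|a|-|b|\bigr)\geq 0$, which is a correct and slightly more elementary finish.
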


\begin{proof}
 Fix $\vec{\bf u},\vec{\bf w}$ as before.  Then
we have that
\begin{align*}
&  \la\T(\vec{\u}) - \T(\vec{\w}),\vec{\u}-\vec{\w}\ra \\
&\qquad 
= \la \T(\vec{\u}),\vec{\u}-\vec{\w}\ra - \la\T(\vec{\w}),\vec{\u}-\vec{\w}\ra\\
& \qquad 
= \int_E |\sqrt{Q(x)}{\bf g}(x)|^{p-2}({\bf g}(x)-{\bf h}(x))^tQ(x){\bf g}(x)\,dx\\
& \qquad \qquad 
 - \int_E |\sqrt{Q(x)}{\bf h}(x)|^{p-2}({\bf g}(x)-{\bf h}(x))^tQ(x){\bf h}(x)\,dx\\
& \qquad 
= \int_E \la |\sqrt{Q}{\bf g}|^{p-2}\sqrt{Q}{\bf g} - |\sqrt{Q}{\bf h}|^{p-2}\sqrt{Q}{\bf h}, \;\sqrt{Q}{\bf g} - \sqrt{Q}{\bf h}\ra_{\Rn}\,dx,
\end{align*}
where we suppress dependence on $x$ in the last line and where $\la\cdot,\cdot\ra_{\Rn}$ is the standard inner product on $\mathbb{R}^n$.  The last integrand is of the form
\[ \la |s|^{p-2}s-|r|^{p-2}r,s-r\ra_{\Rn}, \]
where $s,r\in \mathbb{R}^n$ and $p\geq 1$.  For such $p,s,r$
an inequality in~\cite[chapter 10]{L} (also found in~\cite[\S 1.2, lemma 1]{MDJ}) shows that this expression is non-negative.
Hence, $\mathcal{T}$ is monotone.
\end{proof}


\begin{lemma}\label{hemicontinuity} 
For $1< p<\infty$, the operator $\mathcal{T}$ is hemicontinuous.
\end{lemma}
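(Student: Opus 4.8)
The plan is to verify hemicontinuity directly from the definition in Theorem~\ref{minty}: for fixed $\vec{\bf u}=(u,\bg)$ and $\vec{\bf w}=(w,\bh)$ in $\tilde{H}^{1,p}_Q(v;E)$, I must show that the real-valued function
\[
\Phi(z) = \langle \mathcal{T}(\vec{\bf u}+z\vec{\bf w}),\vec{\bf w}\rangle
= \int_E \big|\sqrt{Q}(\bg+z\bh)\big|^{p-2}\,\bh^tQ(\bg+z\bh)\,dx
\]
is continuous in $z\in\mathbb{R}$. The natural tool is the dominated convergence theorem, applied along an arbitrary sequence $z_k\to z_0$.

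First I would simplify the integrand. Setting $a=\sqrt{Q}\,\bg$ and $b=\sqrt{Q}\,\bh$, both of which lie in $L^p(E;\mathbb{R}^n)$ by the definition of $\mathcal{L}^p_Q(E)$, and using $\bh^tQ\,\xi = b\cdot\sqrt{Q}\,\xi$, the integrand becomes $|a+zb|^{p-2}\,b\cdot(a+zb)$, so that $\Phi(z)=\int_E |a+zb|^{p-2}\,b\cdot(a+zb)\,dx$ with $a,b\in L^p(E;\mathbb{R}^n)$. Next I would establish pointwise continuity in $z$. For $p>1$ the vector field $\Psi(\eta)=|\eta|^{p-2}\eta$ is continuous on all of $\mathbb{R}^n$: the only potential issue is at $\eta=0$, but there $|\Psi(\eta)|=|\eta|^{p-1}\to 0$, so $\Psi$ is continuous everywhere. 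Hence for a.e.\ $x$ the map $z\mapsto |a(x)+zb(x)|^{p-2}\,b(x)\cdot(a(x)+zb(x))$ is continuous, and in particular converges as $z_k\to z_0$.

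The decisive step is to produce a $z$-independent dominating function. Restricting attention to a bounded interval $|z|\le M$ (it suffices to take $M=|z_0|+1$, since eventually $|z_k|\le M$), I would bound the integrand by
\[
|a+zb|^{p-2}\,|b\cdot(a+zb)| \le |a+zb|^{p-1}|b| \le (|a|+M|b|)^{p-1}|b|.
\]
By Hölder's inequality with exponents $p'=p/(p-1)$ and $p$, together with the identity $(p-1)p'=p$, the right-hand side satisfies $\int_E (|a|+M|b|)^{p-1}|b|\,dx \le \||a|+M|b|\|_{L^p}^{p-1}\,\|b\|_{L^p}$, which is finite since $a,b\in L^p(E;\mathbb{R}^n)$. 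With this majorant in hand, the dominated convergence theorem gives $\Phi(z_k)\to\Phi(z_0)$; as $z_0$ and the sequence were arbitrary, $\Phi$ is continuous, which is hemicontinuity.

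I expect the only genuinely delicate point to be the regime $1<p<2$, where $|\eta|^{p-2}$ is singular at the origin, so that both the pointwise continuity of the integrand and the integrability of the majorant must be checked with this singularity in mind. In each case the saving feature is the same: the singular factor $|a+zb|^{p-2}$ is multiplied by $|a+zb|$ (coming from the Cauchy--Schwarz estimate of $b\cdot(a+zb)$), which reduces the effective power to $p-1>0$ and thereby removes the singularity. It is precisely the hypothesis $p>1$ — equivalently $p'<\infty$ — that makes the dominating function integrable and the whole argument go through.
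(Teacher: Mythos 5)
Your argument is correct, and it takes a genuinely different route from the paper. The paper proves hemicontinuity quantitatively: it invokes the pointwise vector inequalities for $\eta\mapsto|\eta|^{p-2}\eta$ from Lindqvist's notes, splitting into the cases $p\ge 2$ (where $\bigl||r|^{p-2}r-|s|^{p-2}s\bigr|\le (p-1)|r-s|(|r|^{p-2}+|s|^{p-2})$ is combined with H\"older at exponents $p/2$ and $p/(p-2)$) and $1<p<2$ (where $\bigl||r|^{p-2}r-|s|^{p-2}s\bigr|\le C(p)|r-s|^{p-1}$ yields an explicit modulus $|z-y|^{p-1}$ up to constants), and then letting $z\to y$. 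You instead argue qualitatively by dominated convergence: after rewriting the integrand as $|a+zb|^{p-2}\,b\cdot(a+zb)$ with $a=\sqrt{Q}\,\mathbf{g},\ b=\sqrt{Q}\,\mathbf{h}\in L^p(E;\mathbb{R}^n)$, you note that $\eta\mapsto|\eta|^{p-2}\eta$ (set to $0$ at $\eta=0$) is continuous for every $p>1$, and you supply the locally uniform majorant $(|a|+M|b|)^{p-1}|b|$, integrable by H\"older. This handles all $p>1$ in a single stroke, with no case split and no appeal to the Lindqvist inequalities; what you give up is the explicit modulus of continuity in $z$ that the paper's estimates provide, but since hemicontinuity in Theorem~\ref{minty} asks only for continuity of $z\mapsto\langle\mathcal{T}(\vec{\mathbf{u}}+z\vec{\mathbf{w}}),\vec{\mathbf{w}}\rangle$, your version fully suffices, and is arguably the cleaner proof of the stated lemma.
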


\begin{proof}
  Let $z,y\in \mathbb{R}$ and let
  $\vec{\u}=(u,{\bf g}),\vec{\w}=(w,\h)$ be as before.  To simplify
  our notation, set ${\bf{\psi}} = \g+z\h$ and $\gamma = \g+y\h$. Then
  we have that
\begin{align}\label{estmain}
\langle\T(\vec{\u}+z\vec{\w}) - \T(\vec{\u}+y\vec{\w}),\vec{\w}\rangle
& =\int_E\Big|\sqrt{Q}\psi\Big|^{p-2} (\h)^tQ\psi\,dx -
  \int_E\Big|\sqrt{Q}\gamma\Big|^{p-2}(\h)^tQ\gamma\,dx \nonumber \\
& = \int_E \Big(\sqrt{Q}\h\Big)^t\Big[|r|^{p-2}r-|s|^{p-2}s\Big]\,dx,
\end{align}
where $r=\sqrt{Q}\psi$ and $s=\sqrt{Q}\gamma$.

We now consider two cases: $p\geq 2$ and $1< p<2$.  If $p\geq 2$, then
by \cite[chapter 10]{L} we have that for $r,s\in\mathbb{R}^n$,
\[ 
\Big||r|^{p-2}r-|s|^{p-2}s\Big| \leq (p-1)|r-s|(|s|^{p-2} + |r|^{p-2}).
\]
Furthermore, by our choice of $r,s$ we have that $r-s =
(z-y)\sqrt{Q}\h$; hence,
\[
\|r-s\|_p \leq |z-y|\|\vec{\w}\|_{H^{1,p}_Q(v;E)}.
\]
If we combine these three inequalities we get
\[ 
\Big|\langle\T(\vec{\u}+z\vec{\w}) -
\T(\vec{\u}+y\vec{\w}),\vec{\w}\rangle\Big| 
\leq (p-1)|z-y|\int_E |\sqrt{Q}\h|^2\Big[|r|^{p-2} + |s|^{p-2}\Big]\,dx.
\]

If $p=2$, it is clear that the integral is finite, so the right-hand side tends to zero as
$z\rightarrow y$.  If $p>2$, then by  H\"older's inequality with
exponents $\frac{p}{2}$ and $\frac{p}{p-2} >1$  we get
\[ 
\int_E |\sqrt{Q}\h|^2\Big[|r|^{p-2} + |s|^{p-2}\Big]\,dx
\lesssim \Big(\|s\|_p^{p-2} + \|r\|_p^{p-2}\Big)\|\h\|_{\mathcal{L}^p_Q(E)}^2 <\infty
\]
since $\vec{\u},\vec{\w}\in \tilde{H}^{1,p}_Q(v;E)$,
$s=\sqrt{Q}\gamma$, and $r=\sqrt{Q}\psi \in L^p(E)$.  So again the integral
is finite and the right-hand size tends to zero as $z\rightarrow y$.  
Thus, $\T$ is hemicontinuous when $p\geq 2$.

\medskip

Now suppose $1< p<2$.  Then again from \cite[chapter 10]{L} we have
that
\[  \Big||s|^{p-2}s-|r|^{p-2}r\Big|\leq C(p)|s-r|^{p-1} \]
for $r,s\in\mathbb{R}^n$ and a positive constant $C(p)$.  But then
from this estimate, \eqref{estmain} and H\"older's inequality,  we get
\begin{multline*}
\Big|\langle\T(\vec{\u}+z\vec{\w}) -
\T(\vec{\u}+y\vec{\w}),\vec{\w}\rangle\Big| 
\leq C(p)\int_E|s-r|^{p-1}|\sqrt{Q}\h|\,dx\\
\leq C(p)\|s-r\|_p^{p-1}\|\h\|_{\mathcal{L}^p_Q(E)}
= C(p)|z-y|\|\h\|_{\mathcal{L}^p_Q(E)}^p.
\end{multline*}
The final term tends to zero as $z\rightarrow y$ , so
$\T$ is  hemicontinuous when $1<p<2$.
\end{proof}


\begin{lemma}\label{coercivity}
Given $1<p<\infty$,
if $\mathcal{Q}$ has the Poincar\'e property of order $p>1$ on $E$,
then for any   $f\in L^p(v;E)$ in the definition of $\Gamma$, $\T$ is almost coercive. 
\end{lemma}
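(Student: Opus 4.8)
The plan is to test the operator $\T$ against $\vec{\u}$ itself and compare the result with $\la\Gamma,\vec{\u}\ra$, using the Poincar\'e inequality to convert control of the degenerate seminorm $\|\bg\|_{\mathcal{L}^p_Q(E)}$ into control of the full Banach space norm. First I would set $\vec{\w}=\vec{\u}$ in the definition of $\T$; since $\bg^tQ\bg = |\sqrt{Q}\bg|^2$, this gives
\[
\la\T(\vec{\u}),\vec{\u}\ra = \int_E |\sqrt{Q}\bg|^{p-2}\bg^tQ\bg\,dx = \int_E|\sqrt{Q}\bg|^p\,dx = \|\bg\|_{\mathcal{L}^p_Q(E)}^p.
\]
On the other side, I would bound $\la\Gamma,\vec{\u}\ra$ by H\"older's inequality exactly as in the estimate following~\eqref{weaksol}:
\[
\la\Gamma,\vec{\u}\ra \leq \Big|\int_E |f|^{p-2}fuv\,dx\Big| \leq \|f\|_{L^p(v;E)}^{p-1}\|u\|_{L^p(v;E)}.
\]

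The crucial step is to replace $\|u\|_{L^p(v;E)}$ by a multiple of $\|\bg\|_{\mathcal{L}^p_Q(E)}$. Because $\vec{\u}\in\tilde{H}^{1,p}_Q(v;E)$ has mean zero, Lemma~\ref{lemma:mean-zero} lets me approximate it by mean-zero $C^1(\overline{E})$ functions and pass the Poincar\'e inequality~\eqref{poincare} to the limit, yielding $\|u\|_{L^p(v;E)} \leq C_p^{1/p}\|\bg\|_{\mathcal{L}^p_Q(E)}$ --- the same inequality already used in the derivation of~\eqref{hypoest}. Writing $G=\|\bg\|_{\mathcal{L}^p_Q(E)}$ and $K = C_p^{1/p}\|f\|_{L^p(v;E)}^{p-1}$, the two bounds combine to
\[
\la\T(\vec{\u}),\vec{\u}\ra - \la\Gamma,\vec{\u}\ra \geq G^p - KG = G\big(G^{p-1}-K\big).
\]

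Finally, I would translate ``large Banach norm'' into ``large $G$.'' The same Poincar\'e inequality gives $\|\vec{\u}\|_{\mathcal{B}} = \|u\|_{L^p(v;E)} + \|\bg\|_{\mathcal{L}^p_Q(E)} \leq (1+C_p^{1/p})G$, so that $G \geq \|\vec{\u}\|_{\mathcal{B}}/(1+C_p^{1/p})$. Setting $\lambda = (1+C_p^{1/p})K^{1/(p-1)}$, any $\vec{\u}$ with $\|\vec{\u}\|_{\mathcal{B}}>\lambda$ satisfies $G > K^{1/(p-1)}$, whence $G^{p-1}-K>0$ and $\la\T(\vec{\u}),\vec{\u}\ra \geq \la\Gamma,\vec{\u}\ra$, which is condition (3) of Theorem~\ref{minty}. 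The one point requiring care --- and the reason the Poincar\'e hypothesis is indispensable here --- is that coercivity is demanded relative to the full norm of $\mathcal{B}$, whereas $\T$ naturally controls only the degenerate seminorm $\|\bg\|_{\mathcal{L}^p_Q(E)}$; the Poincar\'e inequality is exactly what bridges this gap, both in bounding $\la\Gamma,\vec{\u}\ra$ from above and in bounding $\|\vec{\u}\|_{\mathcal{B}}$ below by $G$. Since $p>1$ forces $G^{p-1}\to\infty$, the superlinear term dominates and the argument closes.
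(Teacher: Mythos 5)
Your argument is correct and follows essentially the same route as the paper: test $\T$ against $\vec{\u}$ to get $\la\T(\vec{\u}),\vec{\u}\ra=\|\g\|_{\mathcal{L}^p_Q(E)}^p$, bound $\la\Gamma,\vec{\u}\ra$ by H\"older, and use the Poincar\'e inequality (extended to $\tilde{H}^{1,p}_Q(v;E)$ via Lemma~\ref{lemma:mean-zero}) both to dominate $\|u\|_{L^p(v;E)}$ and to bound the full norm of $\vec{\u}$ by a multiple of $\|\g\|_{\mathcal{L}^p_Q(E)}$, so that $G^p-KG$ is eventually positive. The only cosmetic point is that when $f\equiv 0$ your formula gives $\lambda=0$, so one should take $\lambda=\max\{1,(1+C_p^{1/p})K^{1/(p-1)}\}$ to meet the requirement $\lambda>0$ in Theorem~\ref{minty}, exactly as the paper does.
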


\begin{proof}
Fix a non-zero $\vec{\u}=(u,\g)\in \tilde{H}^{1,p}_Q(v;E)$; then 
\begin{equation}\label{coer1}
\la \T(\vec{\u}),\vec{\u}\ra 
= \int_E|\sqrt{Q}\g|^{p-2}(\g)^tQ\g\,dx
= \|\g\|_{\mathcal{L}^p_Q(E)}^p.
\end{equation}
Arguing as we did above using Lemma~\ref{lemma:mean-zero}, we can
apply the Poincar\'e  inequality to $(u,\g)$.  Since $u_E=0$, 
we get
\[ \|u\|_{L^p(v;E)}^p 
\leq C_p^p\|\g\|_{\mathcal{L}^p_Q(E)}^p=C_p^p\la \T(\vec{\u}),\vec{\u}\ra,
\]
which in turn implies that
\[ 
\|\vec{\u}\|_{H^{1,p}_Q(v;E)}^p \leq (C_p^p+1)\la\T(\vec{\u}),\vec{\u}\ra.
\]

Since $p>1$, by H\"older's inequality and this inequality,  we have 
\begin{align*}
\big|\la\Gamma,\vec{\u}\ra\big| 
& = \Big|-\int_E |f|^{p-2}fuv\,dx\Big|\\
&\leq \int_E |f|^{p-1}v^\frac{1}{p'} |u| v^\frac{1}{p}\,dx\\
&\leq \|f\|_{L^{p}(v;E)}^{p-1} \|\vec{\u}\|_{H^{1,p}_Q(v;E)}\\
&\leq  (C_p^p+1)\|f\|_{L^{p}(v;E)}^{p-1}\|\vec{\u}\|_{H^{1,p}_Q(v;E)}^{1-p}\la T(\vec{\u}),\vec{\u}\ra
\end{align*}
Thus, if  $f\not\equiv 0$,then  $|\la\Gamma,\vec{\u}\ra| <
\la \T(\vec{\u}),\vec{\u}\ra$ provided that 
\[  (C_p^p+1)\|f\|_{L^{p}(v;E)}^{p-1}\|\vec{\u}\|_{H^{1,p}_Q(v;E)}^{1-p}<1. \]
If $f\equiv 0$,  then $\Gamma = 0 \in \big(H^{1,p}_Q(v;E)\big)^*$.  Hence,
by  \eqref{coer1}, if we let  $\lambda =1$, then
$|\la\Gamma,\vec{\u}\ra | < \la \T(\vec{\u}),\vec{\u}\ra$.  
Thus we have shown that for any $f\in L^p(v;E)$,  $\mathcal{T}$ is almost coercive with constant 
\[ \lambda=\max\{1,(C_p^p+1)^{1/p-1}\|f\|_{L^p(v;E)}\}. \]
\end{proof}

\section{Applications of Theorem~\ref{main}}
\label{applications}

In this section we give several applications of Theorem~\ref{main},
primarily showing that the existence of the Poincar\'e inequality
yields the existence of solutions to the corresponding Neumann
problem.  

\subsection*{An Undergraduate Problem}
We begin, however, with an elementary application: we show that the
solution of the Neumann problem on a rectangle in $\R^2$ can be used
to deduce the existence of the Poincar\'e inequality.  Nothing in this
result is new, but we believe that it has a certain pedagogic value as
it lets us prove the Poincar\'e inequality using methods accessible to
undergraduates.

Let $R = (0,a)\times (0,b)$ be a rectangle in $\R^2$, and for
$f\in C^1(\overline{R})$ with $f_R = 0$, consider the Neumann problem
for the Poisson equation on $R$:
\begin{align*}
\left\{\begin{array}{cc}
\Delta u = f&(x,y)\in R,\\
u_x(0,y)=0=u_x(a,y)&0<y<b,\\
u_y(x,0)=0=u_y(x,b)& 0<x<a.
       \end{array}
                                \right.
\end{align*}
We can find a classical solution to this problem using the
eigenfunction expansion associated to the regular Sturm-Liouville
problem $\Delta u = \lambda u$ with boundary values as above;
equivalently, via the cosine expansion of $f$.  We can write
\[ 
f(x,y) = \displaystyle\sum_{n=0}^\infty \sum_{m=0}^\infty F_{mn} \cos\left(\frac{n\pi x}{a}\right)\cos\left(\frac{m\pi y}{b}\right)
\] 
where 
\[ 
F_{mn} = C(R)\iint_R f(x,y)\cos\left(\frac{n\pi x}{a}\right)\cos\left(\frac{m\pi y}{b}\right)dxdy
\]
for $m,n\in \mathbb{N}\cup\{0\}$, $m+n>0$, and $F_{00}=0$ as $f_R=0$.
By Sturm-Liouville theory, this series expansion for $f(x,y)$
converges uniformly on $\overline{R}$.  If we write
\[ 
  u(x,y)=\displaystyle \sum_{n=0}^\infty \sum_{m=0}^\infty A_{mn}
  \cos\left(\frac{n\pi x}{a}\right)\cos\left(\frac{m\pi y}{b}\right)
\]
and insert this into the equation $\Delta u =f$,  we find that for $m+n>0$, 
\[
A_{mn} = -\frac{F_{mn}}{\lambda_{mn}}\text{ where }\lambda_{mn} = \pi^2\left[\frac{n^2}{a^2} + \frac{m^2}{b^2}\right].
\]
We have that $A_{00}$ is arbitrary, so  choose $A_{00}=0$ to ensure
$u_R=0$.  Then the series expansions for $u,u_x,u_y,\Delta u$ each
converge uniformly on $\overline{R}$, and the orthogonality of
eigenfunctions implies that
\[ 
\|u\|_{L^2(R)}^2 
= \displaystyle\mathop{\sum_{n=0}^\infty \sum_{m=0}^\infty}_{m+n>0} \left(\frac{F_{mn}}{\lambda_{mn}}\right)^2.
\]
Since $|F_{mn}| \leq C(R)\|f\|_{L^2(R)}$ and
$\displaystyle\mathop{\sum_{n=0}^\infty \sum_{m=0}^\infty}_{m+n>0}
\lambda_{mn}^{-2}<\infty$, we therefore have that
\[
\|u\|_{L^2(R)} \leq C(R)\Lambda \|f\|_{L^2(R)}.
\]
Hence, we can apply Theorem~\ref{main}:  more properly, we can apply
the argument in Section~\ref{proof-one}, which in this case becomes
completely elementary.  
Thus, there is a constant $C(R)>0$ such that the $2$-Poincar\'e inequality 
\[ 
\int_R |f-f_R|^2\,dx \leq C(R) \int_R |\nabla f|^2\,dx
\]
holds for any $f\in C^1(\overline{R})$.

\medskip

\subsection*{One weight estimates for the degenerate $p$-Laplacian}

In this section we consider the degenerate $p$-Laplacian where the
degeneracy is controlled by Muckenhoupt $A_p$ weights.  We sketch a
few basic facts; for more information on weights, see~\cite{duo}.  For
$1<p<\infty$, a weight $w$ satisfies the $A_p$ condition, denoted
$w\in A_p$, if 
\[ [w]_{A_p} = \sup_B \avgint_B w\,dx \left(\avgint_B
    w^{1-p'}\,dx\right)^{p-1}<\infty, \]
where the supremum is taken over all balls $B$.
Given a domain $E$, the weighted Poincar\'e inequality is 
\begin{equation} \label{eqn:Ap-poincare}
\int_E |f(x)-f_E|^p w(x)\,dx \lesssim \int_E |\grad
  f(x)|^pw(x)\,dx. 
\end{equation}
This inequality is known for quite general domains.  It was first
proved for balls in~\cite{FKS}, and then for bounded domains satisfying the
Boman chain condition in Chua~\cite{chua}.  (See this reference for
precise definitions; domains that satisfy the Boman chain condition
include Lipschitz domains.  For this result, see also~\cite{diening}.)

Now fix $1<p<\infty$ and suppose that $Q : \Omega \rightarrow \Sn_n$
satisfies the degenerate ellipticity condition
\begin{equation} \label{eqn:degen-elliptic}
 \lambda w(x)^{2/p} |\xi|^2 \leq \xi^t\cdot Q(x)\xi \leq \Lambda
  w(x)^{2/p}, 
\end{equation}
where $w\in A_p$ and $\Lambda>\lambda>0$.  For instance, we can take
$Q(x)=w(x)^{2/p}A(x)$, where $A$ is a uniformly elliptic matrix of
measurable functions.  Then
\[ |\grad f(x)|^p w(x) \approx |\sqrt{Q(x)} \grad f(x)|^{p} \]
and so we see that \eqref{eqn:Ap-poincare} is equivalent
to~\eqref{poincare} with $v=w$.  Therefore, we can apply Theorem~\ref{main} to
get the following existence result.

\begin{corollary} \label{cor:Ap-Neumann}
Given $1<p<\infty$ and a bounded domain $E$ satisfying the Boman chain
condition, suppose the matrix $Q$ satisfies \eqref{eqn:degen-elliptic}
for some $w\in A_p$.  Then the associated quadratic form $\Q$ has the
$p$-Neumann property on $E$:~\eqref{nprob} has a solution for every
$f\in L^p(w; E)$.  
\end{corollary}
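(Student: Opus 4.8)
The plan is to reduce the corollary to Theorem~\ref{main} by verifying its two hypotheses on $E$: that $\gamma^{p/2}\in L^1_\loc(E)$, and that $\Q$ has the Poincar\'e property of order $p$. Both will come from the ellipticity condition~\eqref{eqn:degen-elliptic} together with the scalar weighted Poincar\'e inequality that is known to hold on Boman chain domains.

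First I would unwind the pointwise content of~\eqref{eqn:degen-elliptic}. Taking $\xi=\grad f(x)$ in the two-sided bound and raising to the power $p/2$ gives
\[ \lambda^{p/2} w(x)|\grad f(x)|^p \leq \big|\sqrt{Q(x)}\grad f(x)\big|^p \leq \Lambda^{p/2} w(x)|\grad f(x)|^p, \]
which makes precise the equivalence $|\grad f|^p w\approx|\sqrt{Q}\grad f|^p$ recorded before the statement. Letting $\xi$ range over unit vectors in the upper bound also gives $\gamma(x)=|Q(x)|_{\op}\leq \Lambda\, w(x)^{2/p}$, and hence $\gamma^{p/2}\leq \Lambda^{p/2} w$ almost everywhere.

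Next I would dispatch the two hypotheses. Since every $A_p$ weight is locally integrable, the bound $\gamma^{p/2}\leq \Lambda^{p/2} w$ immediately yields $\gamma^{p/2}\in L^1_\loc(E)$. For the Poincar\'e property I would invoke the scalar $A_p$ Poincar\'e inequality~\eqref{eqn:Ap-poincare}, which holds on $E$ because $E$ satisfies the Boman chain condition (Chua~\cite{chua}; the case of balls is~\cite{FKS}, see also~\cite{diening}), with the mean $f_E$ taken with respect to $v=w$ so as to match Definition~\ref{p-prop}. Chaining this with the left-hand pointwise bound above, I obtain for every $f\in C^1(\overline{E})$
\[ \int_E |f-f_E|^p w\,dx \lesssim \int_E |\grad f|^p w\,dx \lesssim \int_E \big|\sqrt{Q}\grad f\big|^p\,dx, \]
which is exactly~\eqref{poincare} with $v=w$. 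Thus $\Q$ has the Poincar\'e property of order $p$ on $E$, and Theorem~\ref{main} then gives that $\Q$ is $p$-Neumann on $E$; in particular~\eqref{nprob} has a weak solution for every $f\in L^p(w;E)$.

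I expect the only delicate point to be bookkeeping around the quoted inequality: one must check that the externally cited weighted Poincar\'e inequality is stated with the $w$-weighted mean $f_E$ and with the exponent $p$ on both sides, so that it coincides with~\eqref{poincare} rather than a mixed-norm or unweighted-mean variant. Once this matching is confirmed, the two pointwise estimates extracted from~\eqref{eqn:degen-elliptic} are entirely routine and the conclusion is immediate from Theorem~\ref{main}.
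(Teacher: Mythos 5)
Your proof is correct and follows essentially the same route as the paper: extract the pointwise equivalence $|\grad f|^p w \approx |\sqrt{Q}\grad f|^p$ from \eqref{eqn:degen-elliptic}, invoke the $A_p$ Poincar\'e inequality of Chua on Boman chain domains to get \eqref{poincare} with $v=w$, and apply Theorem~\ref{main}. Your explicit verification that $\gamma^{p/2}\leq \Lambda^{p/2}w\in L^1_{\mathrm{loc}}(E)$ is a small but welcome addition that the paper leaves implicit.
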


\begin{remark}
Corollary~\ref{cor:Ap-Neumann} should be compared to~\cite{CMN}, where
the existence of solutions to the corresponding Dirichlet problem is
shown.  It would be interesting to determine if their regularity
results extend to solutions of the Neumann problem.
\end{remark}

\subsection*{Two weight estimates}
In this section we consider the Neumann problem for degenerate
$p$-Laplacians where the degeneracy of the quadratic form $\Q$ is
controlled by a pair of weights.  We first consider the type of degeneracy
studied in~\cite{CW,CMN}.

Fix $p>1$.  Then a pair of weights $(w,v)$ is a $p$-admissible pair if 
\begin{enumerate}
\item $v(x) \geq w(x)$, $x\in\mathbb{R}^n$ a.e.
\item $w\in A_p$ and $v$ is doubling: i.e., there exists $C>0$ such
  that $v(B(x,2r))\leq C v(B(x,r))$ for all $x\in\mathbb{R}^n$ and
  $r>0$.   
\item There are positive constants $C$ and $q>p$ such that given balls
  $B_1=B(x,r)$, $B_2=B(y,s)$, $B_1 \subset B_2$, $w$ and $v$ satisfy
  the balance condition
\[ \frac{r}{s}\left(\frac{v(B_1)}{v(B_2)}\right)^{1/q} 
\leq C \left(\frac{w(B_1)}{w(B_2)}\right)^{1/p}. \]
\end{enumerate}
It was shown in~\cite[theorem 1.3]{CW} that given a $p$-admissible
pair $(w,v)$ and a ball $B$, the two weight Poincar\'e inequality
\begin{equation}\label{2wp}
\int_B |f(x)-f_{B_v}|^pv(x)\,dx \leq C(B)\int_B |\nabla f(x)|^pw(x)\,dx
\end{equation}
holds for all $f\in C^1(\overline{B})$ with $C(B)$  independent of $f$.

Now suppose that $B\Subset \Omega$ and that our matrix function $Q(x)$ satisfies the ellipticity condition: 
\begin{equation} \label{lowerellip} 
w(x)|\xi|^p \leq |\sqrt{Q(x)}\xi |^p
\end{equation}
for every $\xi\in \mathbb{R}^n$ and a.e. $x\in
\Omega$. Then~\eqref{poincare} holds, and we get the following
corollary to Theorem~\ref{main}.  

\begin{corollary} \label{cor:two-weight-balance}
Given $1<p<\infty$ and a ball $B\subset \Omega$, suppose $(w,v)$ is a $p$-admissible
pair and the matrix $Q$ satisfies~\eqref{lowerellip}.  Then the associated
quadratic form $\Q$ has the $p$-Neumann property on $B$:
\eqref{nprob} has a solution for every $f\in L^p(v; B)$.
\end{corollary}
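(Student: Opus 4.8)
The plan is to deduce the corollary directly from Theorem~\ref{main}, whose hypotheses on the ball $B$ are twofold: the local integrability $\gamma^{p/2}\in L^1_\loc(B)$, and the Poincar\'e property of order $p$ of Definition~\ref{p-prop}. Since $B\Subset\Omega$ is fixed, I would record the first condition as the standing integrability assumption that makes $\tilde H^{1,p}_Q(v;B)$ a well-defined Banach space via Lemma~\ref{complete}. The entire mathematical content of the corollary is therefore the verification of the Poincar\'e inequality~\eqref{poincare} on $B$, taken with respect to the upper weight $v$ of the admissible pair.

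To establish~\eqref{poincare}, fix $f\in C^1(\overline B)$. The first point to check is that the two notions of weighted average coincide: the average $f_{B_v}$ appearing in the two-weight inequality~\eqref{2wp} is precisely the $v$-weighted average $f_B=f_{B,v}=v(B)^{-1}\int_B fv\,dx$ used in Definition~\ref{p-prop}, so the left-hand sides of~\eqref{2wp} and~\eqref{poincare} are identical. It then remains to dominate the right-hand side of~\eqref{2wp} by that of~\eqref{poincare}, and this is immediate from the pointwise ellipticity bound~\eqref{lowerellip}: integrating $w(x)|\grad f(x)|^p\leq |\sqrt{Q(x)}\grad f(x)|^p$ over $B$ gives $\int_B |\grad f|^p w\,dx\leq \int_B |\sqrt{Q}\grad f|^p\,dx$. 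Chaining the two estimates yields~\eqref{poincare} with $C_p=C(B)$.

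The constant $C(B)$ is exactly the one furnished by~\cite[theorem 1.3]{CW}, which applies because $(w,v)$ is a $p$-admissible pair and $B$ is a ball; in particular it is independent of $f$, so $\Q$ genuinely has the Poincar\'e property of order $p$ on $B$. With both hypotheses of Theorem~\ref{main} in place, that theorem gives that $\Q$ is $p$-Neumann on $B$: for every $f\in L^p(v;B)$ there is a weak solution $(u,\g)_f\in\tilde H^{1,p}_Q(v;B)$ of~\eqref{nprob} satisfying the regularity estimate~\eqref{hypoest}.

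I expect no deep obstacle here, since the difficult analysis is packaged into the external input~\cite{CW} and into Theorem~\ref{main}; the proof is essentially a composition of two inequalities followed by an appeal to the main theorem. The only genuinely delicate points are the two bookkeeping checks above: confirming that the weighted averages in~\eqref{2wp} and~\eqref{poincare} agree so that the inequalities compose cleanly, and verifying $\gamma^{p/2}\in L^1_\loc(B)$. The latter deserves emphasis, because unlike the one-weight situation~\eqref{eqn:degen-elliptic}, where the two-sided bound forces $\gamma^{p/2}\lesssim w\in L^1_\loc$, the lower bound~\eqref{lowerellip} alone does not control $\gamma$ from above; it should thus be carried as a standing hypothesis on $Q$.
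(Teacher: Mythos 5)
Your proposal is correct and follows essentially the same route as the paper: the two-weight Poincar\'e inequality of \cite[Theorem 1.3]{CW} combined pointwise with the lower ellipticity bound~\eqref{lowerellip} yields the Poincar\'e property~\eqref{poincare} on $B$ with weight $v$, and Theorem~\ref{main} then gives the $p$-Neumann property. Your observation that $\gamma^{p/2}\in L^1_{\mathrm{loc}}$ is not implied by the one-sided bound~\eqref{lowerellip} and must be carried as a standing hypothesis is a fair and careful point that the paper leaves implicit.
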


We give a specific example of a matrix $Q$ and weights $(w,v)$ by
adapting an example from~\cite{P}.  Fix $n\geq 3$ and choose $p>1$ so
that $n>p'$.  Let $\frac{p}{2}<s<p$ and define $w(x)=|x|^{p-s}$,
$v=|x|^s$. Since $0<p-s<s<n(p-1)$, both $w,v \in A_p$
and so $v$ is doubling.  The balance condition is easily
verified using relation \cite[(28)]{P} with
$q\in (p,\frac{np}{n+s-p})$.  Define $Q(x)=\text{diag}(w(x),v(x))$, let
$\Omega=B(0,1)$ and let $B\subset \Omega$.   Then by
Corollary~\ref{cor:two-weight-balance} we can solve the degenerate
$p$-Laplacian on $B$.

\bigskip

One drawback to the approach above using $p$-admissible pairs is that
this hypothesis is stronger than we need.  In~\cite[Theorem 1.3]{CW}
they actually proved that if $(w,v)$ is a $p$-admissible pair for some
$q>p$, then a two weight Poincar\'e inequality with gain holds:  for all $f\in C^1(\overline{B})$,
\[  v(B)^{-1/q}\|f-f_{B}\|_{L^q(v;B)} 
\leq C r(B)\, w(B)^{-1/p}\|\nabla f\|_{L^p(w;B)}.
\]

We only need to assume a (presumably weaker) condition on the weights that implies the weighted $(p,p)$
Poincar\'e inequality~\eqref{2wp}.  By the well-known identity
(see~\cite[Lemma~7.16]{GT}), if $E$ is a bounded, convex domain,
then for any $f\in C^1(\overline{E})$ and $x\in E$,
\begin{equation} \label{eqn:riesz-ident}
  |f(x)-f_\Omega| \leq C(\Omega)I_1(|\grad f|)(x), 
\end{equation}
where $I_1$ is the Riesz potential,
\[ I_1 g(x) = \int_E \frac{f(y)}{|x-y|^{n-1}}\,dy.  \]
(Here we assume $\supp(f)\subset E$.)  Hence, to prove a two weight
Poincar\'e inequality, it suffices to find conditions on the weights
$(w,v)$ such that 
\begin{equation} \label{eqn:pp-Riesz}
I_1 : L^p(w; E) \rightarrow L^p(v; E).
\end{equation}
(We note in passing that the
average in~\eqref{eqn:riesz-ident} is unweighted, but it is easy to
pass to weighted averages:  see~\cite[p.~88]{FKS}.)  

There is an extensive literature on such two weight norm inequalities
for Riesz potentials, and we refer the reader
to~\cite{cruz16,MR2797562} for complete information and references.
In particular, we call attention to the so-called $A_p$ bump
conditions, as it is straightforward to construct examples of pairs of
weights that satisfy these conditions.  Here we restrict ourselves to
noting that given a pair of weights $(w,v)$ such
that~\eqref{eqn:pp-Riesz} holds, and given a matrix $Q$ such
that~\eqref{lowerellip} holds, then we can immediately apply
Theorem~\ref{main} to get the existence of solutions to the associated
degenerate $p$-Laplacian.

\bibliographystyle{plain}

\end{document}